
\documentclass[twoside]{article}
\usepackage[a4paper]{geometry}
\usepackage[utf8]{inputenc}
\usepackage[T1]{fontenc} 
\usepackage{RR}
\usepackage{hyperref}
\usepackage{amsfonts,amsthm,latexsym,amssymb,amscd,epsf,stmaryrd}
\usepackage{amsmath,,amsthm,booktabs,color,epsfig,graphicx,url}
\theoremstyle{plain}

\newtheorem{proposition}{Proposition}

\theoremstyle{definition}

\newtheorem{example}{Example}

\def\R{\mathbb{R}}
\def\N{\mathbb{N}}
\def\S{\mathcal{S}}

\def\ort{\mathbb{O}}
\newcommand{\sqb}[2]{\{ #1,\dots, #2 \}}
\newcommand{\ten}[1]{\mathbf{#1}}
\renewcommand{\vec}[1]{\mathbf{#1}}
\newcommand{\ignore}[1]{} 
\renewcommand{\top}{\textsc{T}}

\usepackage{graphicx}
\usepackage{subfig}

\usepackage{float}
\usepackage[export]{adjustbox}
\usepackage{pgfplots,relsize,siunitx}
\usepackage{tikz}
\usetikzlibrary{matrix}

\usepackage[left, mathlines]{lineno}
\runninglinenumbers
\nolinenumbers
\usepackage{booktabs}

\usepackage{array}
\usepackage{makecell}
\usepackage{url}
\RRNo{9429}
\RRdate{\today}
\RRauthor{
		%
	Olivier Coulaud\thanks[sfn]{HiePACS team, Inria Bordeaux-Sud-Ouest, France}
  \and
Alain Franc\thanks{BIOGECO, INRAE, Univ. Bordeaux, 33610 Cestas, France}\thanks[fn]{
Pleiade team - Inria\textbackslash INRAE Bordeaux-Sud-Ouest, France}%
\and Martina Iannacito\thanksref{sfn}}
\authorhead{Coulaud \& Franc \& Iannacito}
\RRtitle{Extension de l'analyse des correspondances à des donnéees multi-dimensionnelles : un point de vue géométrique}
\RRetitle{Extension of Correspondence Analysis to multiway data-sets through High Order SVD: a geometric framework}
\titlehead{Extension of CA to multiway data-sets through HOSVD}
\RRmotcle{Analyse des correspondances, modèle de Tucker, HOSVD, tenseurs, relation barycentrique, nuages de points}
\RRkeyword{Correspondence analysis,
	Tucker model,
	HOSVD,
	tensors,
	barycentric relation,
	point clouds}
\RRresume{Ce document présente une extension de l'analyse des correspondances aux tenseurs par la décomposition en valeurs singulières d'ordre élevé (HOSVD) d'un point de vue géométrique. L'analyse des correspondances est un outil bien connu, développé à partir de l'analyse en composantes principales, pour étudier les tables de contingence. Différentes extensions algébriques de l'analyse des correspondances  aux tables à voies multiples  ont été proposées au fil des ans. En nous appuyant sur le modèle de Tucker et la HOSVD, nous proposons d'associer à chaque mode d'un tenseur un nuage de points. Nous établissons un lien entre les cordonnées de ces différents nuages. Une telle relation est classique en Analyse Factorielle des Correspondances (AFC) pour justifier la projection simultanée des profils lignes et profils colonnes d'une table de contingence (d'où le nom de \emph{correspondance}). Nous étendons une telle relation barycentrique aux liens entre les nuages de points associés aux différents modes de l'Analyse Factorielle des Correspondances Multiple d'un tenseur, construite via la HOSVD avec les métriques de l'AFC.}

\RRabstract{This paper presents an extension of Correspondence Analysis (CA) to tensors through High Order Singular Value Decomposition (HOSVD) from a geometric viewpoint. Correspondence analysis is a well-known tool, developed from principal component analysis, for studying contingency tables. Different algebraic extensions of CA to multi-way tables have been proposed over the years, nevertheless neglecting its geometric meaning. Relying on the Tucker model and the HOSVD, we propose a direct way to associate with each tensor mode a point cloud. We prove that the point clouds are related to each other. Specifically using the CA metrics we show that the barycentric relation is still true in the tensor framework. Finally two data sets are used to underline the advantages and the drawbacks of our strategy with respect to the classical matrix approaches.}


\RRprojets{HiePACS and Pleiade}
\RCBordeaux 

\begin{document}
\makeRR   
\clearpage

\section{Introduction}
	Dimension reduction has been and still is one of the most widely used method to address modeling questions on data sets living in large dimensional spaces~\cite[with further detailed references within]{Izenman2008,Wang2012}. One of the most popular and used method is Principal Component Analysis (PCA)~\cite{Pearson1901,Hotelling1933,Gabriel1971,Atchley1975, Kendall1975, morrison1976, chambers1977,Jolliffe2002,Jolliffe2016}. It has three guises
	\begin{enumerate}
		\item an algebraic framework, working with vector spaces and matrix algebra, based on the Singular Value Decomposition (SVD) of the data matrix, see, e.g.,~\cite{Mardia1979} 
		\item a geometric framework, by associating a point cloud with the matrix representing the data set (one point per row, one dimension per feature), 
		see, e.g.,~\cite{LMF82,Saporta1990}
		\item a statistical modeling approach, see, e.g.,~\cite{Anderson1958,Rao1973}.
	\end{enumerate}
	PCA has been extended to a diversity of situations, like Correspondence Analysis (CA)~\cite{benzecri1973,Hill1974,Greenacre1977,greenacre1978,gauch_1982,gifi1990,Greenacre1984} for the analysis of contingency tables with metrics associated with $\chi^2$ distances~\cite{Greenacre1984,Greenacre2007}. From an algebraic perspective, CA is the PCA of a contingency table with the metric associated with the inverse of row and column marginals. CA has been explained in a geometric framework as well~\cite{Cordier1965, Benzecri1969}. Indeed, as for PCA, in the geometric view of CA each row of a contingency table corresponds to the coordinates of a point in a specific vector space. Since a similar argument holds for both rows and columns, a contingency table is naturally associated with a two point clouds. Thanks to the underlying dimension reduction techniques, CA makes possible the simultaneous visualization and interpretation of the two point clouds in a low dimension space where a specific barycentric relation links the two objects. The dictionary between the geometric and algebraic framework relies on the selection of weights and metrics in the spaces where the point clouds live, see~\cite{LMF82,Saporta1990}. 
	
	\par Multiway data have appeared in a wide range of domains, requesting a further development of these analysis techniques. PCA and associated methods have been extended algebraically to dimension reduction in multiway array, see, e.g.,~\cite{Kroonenberg1983,Bolasco1989,Franc1992, Lathauwer2000a,Lathauwer2000b,Kroonenberg2008,Kolda2009} and many references therein, with a link with tensor algebra~\cite{Franc1989}. These are algebraic extensions, relying on numerical computations based on elementary operations in tensor algebra in the same way that PCA and CA rely on numerical linear algebra. 
	Starting from the generalization of PCA to multiway data through the Tucker model~\cite{Tucker1966}, with High Order Singular Value Decomposition (HOSVD)~\cite{Lathauwer2000a}, an algebraic development of CA to MultiWay Correspondence Analysis (MWCA) follows naturally. Indeed MWCA is the HOSVD of a multiway contingency table associated with metric of the marginal inverse per each mode, see~\cite{Franc1992}. Our work aims to study the geometric perspective of MWCA, which is still poorly investigated. More precisely, we show that a point cloud is associated with each mode of a tensor in MWCA in the same way that a point cloud is associated with either rows or columns of a matrix. Finally, the correspondence between two point clouds based on scaling and computing barycenters holding in the classical CA is generalized to $d$ point clouds in MWCA.
	
	\par The remainder of this paper is organized as follows. Section~\ref{sec:2} introduces all the preliminary tensor definitions, spaces and operations, clarifying the chosen notations. In Section~\ref{sec:3} the algebraic link between the principal components is presented. In particular, Subsection~\ref{sec:4} extends the results of the previous subsections in a space with generic metric. Section~\ref{sec:5} presents the barycentric relation characterizing Correspondence Analysis in the tensor case. Finally we highlight the different outcome on two data sets using the previous theoretical results.

\section{Preliminaries\label{sec:2}}
\subsection{Notations}
	Here real numbers and integers are denoted by small Latin letters, vectors by boldface Latin letters, matrices by capital Latin letters and tensors by boldface capital Latin letters. $\ort(m\times n)$ denotes the set of all the real orthogonal matrices of $m$ rows and $n$ columns. Let us have $d$ finite dimensional vector spaces $E_1,\dots,E_d$ on a same field $\R$. Let $\vec{A}$ be a tensor in $E_1 \otimes \dots \otimes E_d$ with $\mathrm{dim}\: E_\mu=n_\mu$ for $\mu \in \sqb{1}{d}$. The integer $d$ is called the order of the tensor $\vec{A}$, each vector space $E_\mu$ 	is called a mode and $n_\mu$ is the dimension of the tensor $\vec{A}$ for mode $\mu$. We denote as well $\vec{A} \in \R^{n_1 \times \dots 
	\times n_d}$. The matricization or unfolding of $\vec{A}$ with respect to the mode $\mu$ denoted by $A^{(\mu)}$ is a matrix of $n_\mu$ rows and $n_{\ne \mu}$ columns with $n_{\ne \mu} = \prod_{\alpha\ne \mu}n_\alpha$, see~\cite[Definition~1]{Lathauwer2000a}. Notice that the matricization with respect to mode $\mu$ maps the $(i_1,\dots,i_d)$-th tensor element to the $(i_\mu, \overline{i_1,\dots,i_{\mu-1}, i_{\mu+1},\dots, i_d})$-th matrix element with
	\begin{equation}
	\label{eq2:1}
	\overline{i_1,\dots,i_{\mu-1}, i_{\mu+1},\dots, i_d} = 1 + \sum_{\substack{ \alpha = 1\\ \alpha\ne \mu}}^{d}(i_\alpha - 1)m_\alpha\qquad\text{with}\qquad m_\alpha = \prod_{\substack{ \beta = 1\\ \beta\ne \mu}}^{\alpha-1}n_\beta,
	\end{equation}
	see~\cite{Kolda2009}. Finding an approximation of Tucker model is facilitated by using an elementary operations on tensors: the Tensor-Times-Matrix product (TTM). Let $\vec{A} \in E_1 \otimes \dots \otimes E_d$, and $M_\mu \in \mathcal{L}(E_\mu,E_\mu) \simeq E_\mu \otimes E_\mu$ for any mode $\mu$. Let $\vec{A}$ be an elementary tensor $\vec{A} = \vec{a}_1 \otimes \dots \otimes \vec{a}_d$ with $\vec{a}_\mu\in E_{\mu}$. Then, TTM product of $(M_1, 
	\dots,M_d)$ with $\vec{A}$, denoted $(M_1,\dots,M_d)\vec{A}$, is the tensor $M_1\vec{a}_1 \otimes \dots \otimes M_d\vec{a}_d$. This is extended to any tensor by linearity, see, e.g.,~\cite{Lathauwer2000a}. Let $\vec{A}$ be a tensor of $E_1\otimes \dots\otimes E_d$ and let $(M_1,\dots, M_d)$ be a $d$-tuple of matrices of compatible dimension with $\vec{A}$. If $\vec{B} = (M_1,\dots, M_d)\vec{A}$, then the $\mu$-matricization of $\vec{B}$ is
	\begin{equation}
  	\label{rk:1}
  	B^{(\mu)} = M_\mu A^{(\mu)} (M_{d}\otimes_K\dots\otimes_K M_{\mu+1}\otimes_K M_{\mu-1}\otimes_K\dots\otimes_K M_1)^\top
	\end{equation} 
	for every $\mu\in\sqb{1}{d}$ and, $\otimes_K$ the Kronecker product, we refer to~\cite[Proposition 3.7]{SAND2006-2081} for further details.
	\par Let $N_\mu=M_\mu^2$ be a Symmetric Positive Definite (SPD) matrix defining an inner product on $E_\mu$ by $\langle \vec{a},\vec{a}'\rangle_{M_\mu} = \langle M_\mu\vec{a}, M_\mu\vec{a}'\rangle = \langle N_\mu\vec{a}, \vec{a}'\rangle = \langle \vec{a}, N_\mu\vec{a}'\rangle$. It is extended to the whole spaces by linearity. This induces an inner product on $E_1 \otimes \dots \otimes E_d$ on elementary tensors by $\langle \vec{a}_1 \otimes \dots \otimes \vec{a}_d \; , \; \vec{a}'_1 \otimes \dots \otimes \vec{a}'_d\rangle_{\textsc{m}_1 \otimes \dots \otimes \textsc{m}_d} = 
	\langle M_1\vec{a}_1 \otimes \dots \otimes M_d\vec{a}_d \; , \; M_1\vec{a}'_1 \otimes \dots \otimes M_d\vec{a}'_d\rangle$. Let us denote by $\S = (\bigotimes_\mu E_\mu,\bigotimes_\mu\mathbb{I}_\mu)$ the tensor space $E_1 \otimes \dots \otimes E_d$ endowed with standard inner product with $\mathbb{I}_\mu$ being the identity matrix on $E_\mu$, and by $\S_\textsc{m} = (\bigotimes_\mu E_\mu, \bigotimes_\mu M_\mu)$ the same tensor space when endowed with inner product induced by the matrices $M_\mu$. We will use the observation that the map
	\[
	\nu: \S_{\textsc{m}} \rightarrow \S \qquad\text{with}\qquad \nu\bigl(\bigotimes_\mu \vec{a}_\mu\bigr) = \bigotimes_\mu M_\mu\vec{a}_\mu 
	\]
	is an isometry, because $\|\bigotimes_\mu M_\mu\vec{a}_\mu \| = \prod_\mu \|M_\mu\vec{a}_\mu\| = \prod_\mu \|\vec{a}_\mu\|_{\textsc{m}_\mu} = 
	\|\bigotimes_\mu \vec{a}_\mu \|_{\textsc{m}_1 \otimes \dots \otimes \textsc{m}_d}$ where $\|\cdot\|$ denotes the Frobenius norm. This can be extended to the whole tensor space $E_1 \otimes \dots \otimes E_d$ by linearity (see~\cite{Franc1992} for details). The isometry can be written as $\nu(\vec{A})= (M_1,\dots,M_d)\vec{A}$.

\subsection{Tucker model\label{sec2:5}} 
	The Tucker decomposition~\cite{Tucker1966, Kroonenberg1983,Kapteyn1986,Kroonenberg2008, Kolda2009} of tensor $\ten{A}\in E_1\otimes \cdots\otimes E_d$ is
	\begin{equation}
	\label{eq2:2}
		\vec{A} = \sum_{i_1=1}^{r_1} \dots \sum_{i_d=1}^{r_d}\, C_{i_1\dots i_d}\, \vec{u}^{1}_{i_1} \otimes \dots \otimes \vec{u}^{d}_{i_d}
	\end{equation}
	where the array of the $(C_{i_1\dots i_d})_{i_1,\dots,i_d}$ is the \emph{core tensor} of $\vec{A}$ and $(\vec{u}^{\mu}_{i_\mu})_{i_\mu}$ is an orthonormal basis of $U_\mu \subseteq E_\mu$ with $r_\mu$ minimal for $i_\mu\in\sqb{1}{r_\mu}$ and $\mu\in\sqb{1}{d}$. So if $\ten{A}$ belongs to $U_1\otimes \dots\otimes U_d$ with $U_\mu\subseteq E_\mu$ and $\text{dim}\,U_\mu = r_\mu$, then $\vec{r}=(r_1, \dots, r_d)$ is the \emph{multilinear rank} of $\ten{A}$~\cite{Lathauwer2000a}. In a synthetic way Tucker model is expressed with the TTM product as $\ten{T} = (U_1,\dots U_d)\ten{C}$. We identify each Tucker subspace with its orthonormal basis, denoting $U_\mu\in\ort(n_\mu \times r_\mu)$ for every mode $\mu\in\sqb{1}{ d}$. 
	
	\par Starting from this Tucker model, we formulate an approximation problem as follows. Given a tensor $\vec{A}$, its best Tucker approximation at multilinear rank $\vec{r}$ is the tensor $\vec{T}$ of multi-linear rank $\vec{r}$ such that $\|\vec{A}-\vec{T}\|$ is minimal. This is a natural extension of PCA because the unknowns are the spaces $(U_\mu)_\mu$ under constraints of dimensions. Historically speaking, finding a solution to Tucker best approximation has a long history which can be found, e.g., in~\cite{Tucker1966,Franc1992, Kolda2009, Grasedick2010}. There is no known algorithm yielding the best solution of the approximation problem, although several algorithms provide good quality results, known nowadays as High Order Singular Value Decomposition (HOSVD)~\cite{Lathauwer2000a}, its Truncated version (T-HOSVD)~\cite{StHOSVD} and High Order Orthogonal Iterations (HOOI)~\cite{Lathauwer2000b, Kolda2009}. The seminal paper for what is now called HOOI has been published as Tuckals3 for $3$-order tensors is~\cite{Kroonenberg1980} (see as well~\cite{Kroonenberg2008}). The extension to four mode tensors has been made by Lastovicka in~\cite{Lastovicka1981}, and more generally to $d$-order multi-arrays by a group in Groningen in 1986~\cite{Kapteyn1986}. These works used Kronecker product as an algebraic framework, and those results have been put into a common framework of tensor algebra in~\cite{Franc1992}. Both approaches (decomposition and best approximation) have been popularized by two papers by de Lauthauwer et al. in 2000, who derived HOSVD~\cite{Lathauwer2000a} and HOOI~\cite{Lathauwer2000b} relying on matricization and matrix algebra. Matricization, called unfolding as well, is building a matrix with one mode in row, and a combination of the remaining ones in columns. 

\subsection{Multiway correspondence analysis}
	PCA is solving dimension reduction problem for a matrix in $E \otimes F$, with $E = \R^{m}$ and $F = \R^{n}$ as natural choice in data science. In the geometric context, a cloud $\mathcal{A}$ of $m$ points in $\R^n$ is associated with a matrix $A$, where point $i$ is row $i$ of $A$ (points are in $F$). PCA of $A$ is building a new orthonormal basis in $F$, called principal axis, and computing the coordinates of the points in this new basis, called principal components. Classically PCA is realized with a SVD of the given matrix, i.e., $A=U\Sigma V^\top$. Then the principal axis are defined as the columns of $V$, and the array of coordinates is given by $Y=U\Sigma$. It can be developed \emph{mutatis mutandis} by selecting some inner products associated with SPD matrices in $E$ and/or $F$. Often in data analysis, those SPD matrices are diagonal, and the metrics are defined by weights. Indeed given $M$ in $\R^{m \times m}$ and $Q$ in $\R^{n \times n}$, we define the inner product $\langle A,B \rangle_{M \otimes Q}= \langle MAQ,MBQ\rangle$. Then, PCA of $A$ at rank $r$ with this inner product is finding a rank $r$ matrix $A_r \in \R^{m \times n}$ such that $\|A-A_r\|_{M \otimes Q}$ is minimal. In other words, we compute the SVD at rank $r$ of $X = \nu_M(A) = MAQ$. If $(Y,\Lambda, V)$ is the set of principal components $Y$ of $X$, eigenvalues $\Lambda = \Sigma^2$ and principal axis $V$, then, principal components and axis of PCA of $A$ with metrics so defined are $(M^{-1}Y, \Lambda, Q^{-1}V)$. In particular, as Figure \ref{figS:1L} shows, CA is a PCA on a contingency table with metrics associated with inverse of the marginals as weights~\cite{LMF82}.
	
	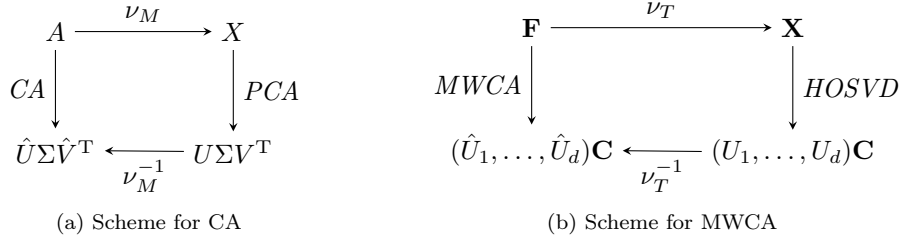
\begin{figure}[htb]
		\centering
		\subfloat[Scheme for CA]{
			\vspace{-5pt}
			\begin{tikzpicture}
			\matrix (m) [matrix of math nodes,row sep=3em,column sep=3em, ampersand replacement=\&]
			{A \&  X\\
				\hat{U}\Sigma \hat{V}^\top \& U\Sigma V^\top \\};
			\path[-stealth] (m-1-1) edge node [above] {$\nu_M$} (m-1-2);
			\path[-stealth] (m-1-2) edge node [right] {\textit{PCA}} (m-2-2);
			\path[-stealth] (m-2-2) edge node [below] {$\nu_M^{-1}$} (m-2-1);
			\path[-stealth] (m-1-1) edge node [left] {\textit{CA}} (m-2-1);
			\end{tikzpicture}\label{figS:1L}}
		\qquad\qquad
		\subfloat[Scheme for MWCA]{
			\begin{tikzpicture}
			\matrix (m) [matrix of math nodes,row sep=3em,column sep=3em,ampersand replacement=\&]
			{\ten{F} \&  \ten{X}\\
				(\hat{U}_1,\dots, \hat{U}_d)\vec{C} \& (U_1,\dots, U_d)\vec{C} \\};
			\path[-stealth] (m-1-1) edge node [above] {$\nu_T$} (m-1-2);
			\path[-stealth] (m-1-2) edge node [right] {\textit{HOSVD}} (m-2-2);
			\path[-stealth] (m-2-2) edge node [below] {$\nu_T^{-1}$} (m-2-1);
			\path[-stealth] (m-1-1) edge node [left] {\textit{MWCA}} (m-2-1);
			\end{tikzpicture}\label{figS:1R}}
		\caption{Correspondence analysis matrix and tensor approach}
	\end{figure}

	 This is extended naturally to a multiway contingency table $\textbf{T}$~\cite{Kroonenberg2008}, and formalized through HOSVD. In Figure \ref{figS:1R} we sketch this approach, which we call MWCA as it extends with HOSVD the method of CA. As preprocessing step we compute the relative frequency tensor $\ten{F}$ dividing each tensor $\ten{T}$ entry by the sum of all its entries. First step is to compute all marginals of $\mathbf{F}$ for all indices for all modes. This yields a vector of weights $\mathbf{w}^\mu = (w^{\mu}_{i_1}, \dots,w^{\mu}_{i_\mu})$ for mode $\mu$. In the second step the isometry $\nu_T$ is defined by the diagonal matrix $M_\mu$ whose diagonal elements are inverse of the weights. Third step is to perform HOSVD of $\ten{X} = \nu_T(\mathbf{F})$. Finally the HOSVD decomposition is transported back with the isometry inverse $\nu_T^{-1}$. 
	
\section{Multiway principal components analysis\label{sec:3}}
  Starting from the extension of principal component analysis to tensors with HOSVD, we associate a point cloud with each mode, and show the existence of an algebraic link between them. The aim of this section is interpreting from a geometrical point of view this relation. For the sake of simplicity in the result explicit verification, we first prove a link between the point clouds in the standard Euclidean $3$-order tensor space, and then we generalize to $d$-order tensors. This structure choice is kept throughout the document. We will focus especially on the geometric interpretation of these results. We naturally attach a point cloud to each matricization of $3$-order tensor. Each point cloud is the optimal projection of the mode matricization in low dimension space. Finally, we show how their coordinates are linked and we extend this result to general metric spaces of $d$-order tensors. 
  \par Let $\vec{X}\in E_1\otimes \dots\otimes E_d$ be a tensor with $\text{dim}(E_\mu) = n_\mu$ and let $(U_\mu)_\mu\in\ort(n_\mu \times r_\mu)$ be the rank $\vec{r} = (r_1,\dots, r_d)$ Tucker decomposition basis obtained from the HOSVD algorithm. The tensor $\vec{X}$ is expressed as
	\begin{equation}
	\label{eq3:0}
	  \vec{X} = \sum_{i_1,\dots, i_d = 1}^{r_1,\dots, r_d} C_{i_1\dots i_d}\vec{u}^1_{i_1}\otimes \dots\otimes \vec{u}^d_{i_d}
	\end{equation}
  with $\vec{C}$ the HOSVD core tensor and $\vec{u}^{\mu}_{i_{\mu}}$ the $i_{\mu}$-th column of $U_{\mu}$. The condition for Equation~\eqref{eq3:0} to be a decomposition is $r_\mu = \text{rank}(X^{(\mu)})$ for $\mu\in\sqb{1}{d}$. Let $\Sigma_\mu$ be the diagonal singular value matrix of the matricization of $\vec{X}$ with respect to mode $\mu$. For simplicity the $i_\mu$-th diagonal element of $\Sigma_\mu$ is denoted by $\sigma^{(\mu)}_{i_\mu}$ for every $i_\mu\in\sqb{1}{r_\mu}$ and for every $\mu\in\sqb{1}{d}$. The principal component of mode $\mu$ is $Y_\mu\in\R^{n_\mu\times r_\mu}$ defined as $Y_\mu = U_\mu\Sigma_\mu$ for each $\mu\in\sqb{1}{d}$.

\subsection{The $3$-order tensor case in the Euclidean space} 
  For the sake of simplicity and clarity, we assume $d$ equal to $3$. The following proposition states a relation linking the three sets of principal coordinates.
  
  \begin{proposition}
		\label{lmT:1} Let $\vec{X}$ be a tensor of $E_1\otimes E_2\otimes E_3$ with $\text{dim}(E_\mu) = n_\mu$ and let $\vec{C}$ be its HOSVD core at multi-linear rank $\vec{r}$. Let $Y_\mu$ be the principal components of mode $\mu$. If $r_\mu \le \text{rank}({A}^{(\mu)})$ for $\mu\in\{1,2,3\}$, then 

		\begin{equation*}
		\begin{split}
		Y_1 &= {X}^{(1)}(Y_3\otimes_K Y_2)({B}^{(1)})^\top\\
		Y_2 &= {X}^{(2)}(Y_3\otimes_K Y_1)({B}^{(2)})^\top\\\
		Y_3 &= {X}^{(3)}(Y_2\otimes_K Y_1)({B}^{(3)})^\top
		\end{split}
		\end{equation*}

		with $\vec{B} = (\Sigma_1^{-1},\Sigma_2^{-1},\Sigma_3^{-1})\vec{C}$. 
	\end{proposition}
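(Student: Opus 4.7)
The plan is to prove the three identities by direct algebraic substitution, exploiting the matricization formula~\eqref{rk:1} together with the orthonormality of the HOSVD factors and one key identity about the core. By symmetry it suffices to establish the mode-$1$ case; the other two are analogous after permuting the roles of the modes.

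First I would matricize the HOSVD decomposition $\vec{X}=(U_1,U_2,U_3)\vec{C}$ and the auxiliary tensor $\vec{B}=(\Sigma_1^{-1},\Sigma_2^{-1},\Sigma_3^{-1})\vec{C}$ using~\eqref{rk:1}, obtaining
\[
X^{(1)} = U_1\, C^{(1)}\,(U_3\otimes_K U_2)^\top,\qquad
B^{(1)} = \Sigma_1^{-1}\, C^{(1)}\,(\Sigma_3^{-1}\otimes_K \Sigma_2^{-1}).
\]
Substituting these and $Y_\mu = U_\mu\Sigma_\mu$ into the right-hand side of the claim, and applying the mixed-product rule $(A\otimes_K B)(C\otimes_K D)=(AC)\otimes_K(BD)$, the factor $(U_3\otimes_K U_2)^\top(Y_3\otimes_K Y_2)$ collapses to $(U_3^\top U_3\,\Sigma_3)\otimes_K(U_2^\top U_2\,\Sigma_2)=\Sigma_3\otimes_K\Sigma_2$ by orthonormality of the HOSVD columns. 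Similarly $(\Sigma_3\otimes_K\Sigma_2)(\Sigma_3^{-1}\otimes_K\Sigma_2^{-1})$ reduces to the identity, so the right-hand side simplifies to $U_1\,C^{(1)}(C^{(1)})^\top\,\Sigma_1^{-1}$.

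At this point the main obstacle — and the only non-bookkeeping step — is to identify $C^{(1)}(C^{(1)})^\top$ with $\Sigma_1^2$. I would obtain it by computing $X^{(1)}(X^{(1)})^\top$ in two ways: on one hand, from the SVD $X^{(1)}=U_1\Sigma_1 V_1^\top$ we get $U_1\Sigma_1^2 U_1^\top$; on the other hand, using the matricization written above and the orthonormality of $U_2,U_3$, we get $U_1\,C^{(1)}(C^{(1)})^\top\,U_1^\top$. Equating the two and multiplying on the left by $U_1^\top$ and on the right by $U_1$ yields the required identity $C^{(1)}(C^{(1)})^\top=\Sigma_1^2$. This step implicitly uses the HOSVD hypothesis on the $U_\mu$ and relies on $r_\mu\le\mathrm{rank}(X^{(\mu)})$ so that $\Sigma_\mu$ is invertible, which is where the stated rank assumption enters.

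Plugging this identity back in gives $U_1\Sigma_1^2\Sigma_1^{-1}=U_1\Sigma_1=Y_1$, which is the desired conclusion. The formulas for $Y_2$ and $Y_3$ follow in exactly the same manner, with the indices cycled and the analogous mode-$\mu$ version of~\eqref{rk:1} applied.
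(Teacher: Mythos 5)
Your proof is correct, but it takes a different route from the paper's. The paper argues columnwise: it writes $X^{(1)}=\sum_i \vec{u}^1_i\otimes\bigl(\sum_{j,k}C_{ijk}\vec{u}^3_k\otimes_K\vec{u}^2_j\bigr)$, compares this with the SVD $X^{(1)}=Y_1V_1^\top$ to identify each right singular vector as $\vec{v}^1_i=\frac{1}{\sigma^{(1)}_i}\sum_{j,k}C_{ijk}\vec{u}^3_k\otimes_K\vec{u}^2_j$, rescales by the other modes' singular values to get the intermediate formula $V_1=(Y_3\otimes_K Y_2)(B^{(1)})^\top$, and concludes by $Y_1=X^{(1)}V_1$. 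You instead verify the claimed identity directly by substituting the matricized forms of $\vec{X}$ and $\vec{B}$ into the right-hand side and collapsing everything with the mixed-product rule, which reduces the whole statement to the single identity $C^{(1)}(C^{(1)})^\top=\Sigma_1^2$ — the all-orthogonality property of the HOSVD core — which you then derive correctly by computing $X^{(1)}(X^{(1)})^\top$ in two ways. Your argument is more compact and makes explicit a structural fact about the core that the paper never states (it is hidden in the paper's use of $V_1^\top V_1=I$ for the matrix $V_1$ coming from the SVD); the paper's argument, on the other hand, produces the expression of $V_1$ in terms of $Y_2$ and $Y_3$ as a byproduct, which carries the geometric reading the authors want (the principal axes of one mode lie in the span of the Kronecker products of the other modes' principal components) and foreshadows the barycentric relations of Section~\ref{sec:5}. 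Both arguments use the invertibility of the $\Sigma_\mu$ at the same point, and both implicitly treat $\vec{X}$ as exactly equal to its rank-$\vec{r}$ Tucker representation, so neither is more nor less rigorous on the truncation issue.
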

	
	\begin{proof}[\textbf{\upshape Proof:}]We start the proof for the first mode principal components. Let $\vec{X}$ be expressed in the HOSVD basis as in Equation \eqref{eq3:0}, i.e.,

		\begin{equation*}
		\vec{X}=\sum_{i,j,k = 1}^{r_1,r_2,r_3}C_{ijk}\vec{u}^1_{i}\otimes \vec{u}^2_{j}\otimes \vec{u}^3_{k}.
		\end{equation*}

		Then the matricization of $\vec{X}$ with respect to mode $1$ in the Tucker basis is expressed with the Kronecker product as
		\begin{equation}
		\label{eq3:1}
		X^{(1)} = \sum_{i=1}^{r_1} \vec{u}^1_{i}\otimes \biggl(\sum_{j,k=1}^{r_2,r_3}C_{ijk}\vec{u}^{3}_{k}\otimes_K\vec{u}^{2}_{j}\biggr).
		\end{equation}
		The PCA of ${X}^{(1)}$ is
		\begin{equation}
		\label{eq3:2}
		{X}^{(1)} = Y_1V_1^\top = \sum_{i=1}^{r_1}\sigma^{(1)}_{i} \vec{u}^{1}_{i}\otimes \vec{v}^{1}_{i}
		\end{equation}
		with $\vec{v}_i^1$ the $i$-th column of $V_1\in\ort(n_2n_3\times r_1)$ and $\sigma^{(1)}_i\vec{u}^{1}_i$ $i$-th column of $Y_1\in\R^{n_1\times r_1}$. 
		By comparing Equations \eqref{eq3:1} and \eqref{eq3:2} for a fixed index $i$, we get
		\begin{equation*}
		\sigma^{(1)}_i \vec{u}^{1}_i\otimes \vec{v}^{1}_i = \vec{u}^1_{i}\otimes\biggl( \sum_{j,k=1}^{r_2,r_3}C_{ijk}\vec{u}^{3}_{k}\otimes_K\vec{u}^{2}_{j}\biggr).
		\end{equation*}

		Remarking that $\Sigma_1$ is invertible, we identify $\vec{v}^1_i$ with a linear combination of the Kronecker product of $\vec{u}^2_j$ and $\vec{u}^3_k$ scaled by $\sigma^{(1)}_i$ as
		\begin{equation}
		\label{eq3:3}
		\vec{v}^{1}_i = \frac{1}{\sigma^{(1)}_i}\sum_{j,k=1}^{r_2,r_3}C_{ijk}\vec{u}^{3}_k\otimes_K \vec{u}^2_{j}.
		\end{equation}
		Notice that the $j$-th and $k$-th column of $Y_2=U_2\Sigma_2$ and $Y_3=U_3\Sigma_3$ are $\sigma^{(2)}_j\vec{u}^{2}_j$ and $\sigma^{(3)}_k\vec{u}^{3}_k$ respectively.
		So introducing in Equation \eqref{eq3:3} the singular values $\sigma^{(2)}_{j}$ and $\sigma^{(3)}_{k}$, we express the $i$-th column of $V_1$ as a linear combination of the Kronecker product of the $j$-th and $k$-th column of $Y_2$ and $Y_3$, i.e.
		
		\begin{equation*}
		\label{eq3:4}
		\begin{split}
  		\vec{v}^{1}_i &=
  		\frac{1}{\sigma^{(1)}_i}\sum_{j,k=1}^{r_2,r_3}C_{ijk}\frac{\sigma^{(2)}_{j}\sigma^{(3)}_{k}}{\sigma^{(2)}_{j}\sigma^{(3)}_{k}}\,\vec{u}^{3}_k\otimes_K \vec{u}^2_{j}\\
  		&= \sum_{j,k=1}^{r_2,r_3}\frac{C_{ijk}}{\sigma^{(1)}_i\sigma^{(2)}_{j}\sigma^{(3)}_{k}}\,\vec{y}^3_k\otimes_K\vec{y}^2_j\\
  		&=\sum_{j,k=1}^{r_2,r_3}B_{ijk}\,\vec{y}^3_k\otimes_K\vec{y}^2_j
		\end{split}
		\end{equation*}
		with $\vec{B} = (\Sigma_1^{-1},\Sigma_2^{-1},\Sigma_3^{-1})\vec{C}$, $\vec{y}^2_j$ and $\vec{y}^3_k$ the $j$-th and $k$-th column of $Y_2$ and $Y_3$ respectively .\\
		Remark that $Y_3\otimes Y_2$ is a matrix of $n_2n_3$ rows and $r_2r_3$ columns whose $\ell$-th column is $\;\vec{y}^3_k\otimes_K\vec{y}^2_j$ with $\ell = \overline{jk}$ for every $j\in\sqb{1}{r_2}$, $k\in\sqb{1}{r_3}$ and $\ell\in \sqb{1}{r_2r_3}$, as defined in Equation~\eqref{eq2:1}. The tensor $\vec{B}$ matricized with respect to mode $1$ is a matrix of $r_1$ rows and $r_2r_3$ columns, whose $(i,\overline{jk})$-th element is $b_{ijk}$ for all $j\in\sqb{ 1}{r_2}$, $k\in\sqb{1}{r_3}$. So the sum in the right-hand side of Equation \eqref{eq3:4} can be expressed as the matrix-product between $Y_3\otimes_K Y_2$ and tensor $\vec{B}$ matricized with respect to mode $1$ as
		\begin{equation}
	  \label{eq3:5}
	  V_1 =(Y_3\otimes_K Y_2)({B}^{(1)})^\top.
		\end{equation}
		
		Multiplying Equation \eqref{eq3:2} on the right by $V_1$ yields $Y_1 = {X}^{(1)}V_1$. Therefore 
		multiplying Equation \eqref{eq3:5} by the matricization of $\vec{X}$ with respect to mode $1$, the principal component $Y_1$ is expressed as linear combination of the Kronecker product of principal components $Y_2$ and $Y_3$, i.e.
		\begin{equation*}
		Y_1 = {X}^{(1)}V_1 = {X}^{(1)}(Y_3\otimes_K Y_2)({B}^{(1)})^\top.
		\end{equation*}
		The other relations follow straightforwardly from this one, permuting the indices coherently.
	\end{proof}
	From an algebraic view this first proposition shows that the principal components of each mode can be expressed as a linear combination of the principal components of the two other modes. However as pointed out in the preliminary section, principal components can be seen from different viewpoints. 
	\par From a geometric viewpoint a point cloud $\mathcal{X}_\mu$ is attached to each mode $\mu$ matricization of tensor $\vec{X}$ for $\mu\in\{1,2,3\}$. Indeed the $i_\mu$-th row of ${X}^{(\mu)}$ represents the coordinates of the $i_\mu$-th element of mode $\mu$ point cloud living in the space $\R^{n_{\ne \mu}}$ where $n_{\ne\mu} = \frac{n_1n_2n_3}{n_{\mu}}$. Given a multi-linear rank $\vec{r}$, we reformulate the problem of the Tucker approximation as a problem of dimensional reduction. Indeed we look for the subspace of $\R^{n_{\ne\mu}}$ of dimension $r_{\mu}$ which minimizes in norm the projection of point cloud $\mathcal{X}_{\mu}$ on it. This problem is solved with the HOSVD algorithm, which provides three orthogonal basis $(U_1, U_2, U_3)$ of the corresponding subspaces. Therefore the $i_\mu$-th row of $Y_{\mu}=U_{\mu}\Sigma_{\mu}$ represents the coordinates of the $i_\mu$-th element of $\mathcal{X}_\mu$ projected into the subspace of $\R^{n_{\ne\mu}}$ of dimension ${r}_{i_\mu}$ for $\mu\in\{1,2,3\}$. The Proposition \ref{lmT:1} result is interpreted geometrically as each point cloud living in the linear subspace built from the Kronecker product of the other two.

\subsection{Generalization to $d$-order tensors}
  Now, we generalize Proposition \ref{lmT:1} to the $d$-case as follows.
	
	\begin{proposition}
		\label{lmT:3} Let $\vec{X}$ be a tensor of $E_1\otimes \dots\otimes E_d$ with $\text{dim}(E_\mu) = n_\mu$ and let $\vec{C}$ be its HOSVD core at multi-linear rank $\vec{r}$. Let $Y_\mu$ be the principal components of mode $\mu$. If ${r}_\mu \le \text{rank}({A}^{(\mu)})$ for $\mu\in\sqb{1}{d}$, then 
	  \begin{equation*}
		Y_{\mu} = {X}^{(\mu)}(Y_{d}\otimes_K\dots\otimes_K Y_{\mu +1 }\otimes_K Y_{\mu - 1}\otimes_K\dots\otimes_K Y_1)({B}^{(\mu)})^\top
	  \end{equation*}
		with $\vec{B} = (\Sigma_1^{-1},\dots,\Sigma_d^{-1})\vec{C}$ for every $\mu\in\sqb{1}{d}$. 
	\end{proposition}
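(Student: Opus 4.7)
My approach is to follow the same three-step scheme as in Proposition~\ref{lmT:1}, simply generalizing from three Kronecker factors to $d-1$ of them, with careful attention to the ordering convention for unfoldings. First, I would expand $\vec{X}$ in its HOSVD basis as in Equation~\eqref{eq3:0} and unfold it along mode $\mu$ using the formula~\eqref{rk:1}. This yields $X^{(\mu)}$ as a sum over $i_\mu$ of outer products $\vec{u}^\mu_{i_\mu}\otimes\vec{w}_{i_\mu}$, where $\vec{w}_{i_\mu}$ is the linear combination of the Kronecker products $\vec{u}^d_{i_d}\otimes_K\cdots\otimes_K\vec{u}^{\mu+1}_{i_{\mu+1}}\otimes_K\vec{u}^{\mu-1}_{i_{\mu-1}}\otimes_K\cdots\otimes_K\vec{u}^1_{i_1}$ with coefficients $C_{i_1\dots i_d}$, the ordering being dictated by the conventions fixed in~\eqref{eq2:1} and~\eqref{rk:1}.

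Second, I would confront this expansion with the thin SVD $X^{(\mu)}=Y_\mu V_\mu^\top=\sum_{i_\mu}\sigma^{(\mu)}_{i_\mu}\vec{u}^\mu_{i_\mu}\otimes\vec{v}^\mu_{i_\mu}$, which, by orthonormality of the left singular vectors, forces $\sigma^{(\mu)}_{i_\mu}\vec{v}^\mu_{i_\mu}=\vec{w}_{i_\mu}$ for every $i_\mu\in\sqb{1}{r_\mu}$. The hypothesis $r_\mu\le\mathrm{rank}(X^{(\mu)})$ guarantees that $\sigma^{(\mu)}_{i_\mu}\ne 0$, so that $\vec{v}^\mu_{i_\mu}$ can legitimately be solved for. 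The central algebraic trick, inherited directly from the proof of Proposition~\ref{lmT:1}, is then to multiply and divide each summand by $\prod_{\alpha\ne\mu}\sigma^{(\alpha)}_{i_\alpha}$: this replaces every factor $\vec{u}^\alpha_{i_\alpha}$ by the column $\vec{y}^\alpha_{i_\alpha}=\sigma^{(\alpha)}_{i_\alpha}\vec{u}^\alpha_{i_\alpha}$ of $Y_\alpha=U_\alpha\Sigma_\alpha$, and turns the scalar coefficient $C_{i_1\dots i_d}/\prod_{\alpha}\sigma^{(\alpha)}_{i_\alpha}$ into $B_{i_1\dots i_d}$, the entry of $\vec{B}=(\Sigma_1^{-1},\dots,\Sigma_d^{-1})\vec{C}$. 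Reassembling the columns of $V_\mu$ into a matrix and reading off the result via the indexing convention~\eqref{eq2:1} gives $V_\mu=(Y_d\otimes_K\cdots\otimes_K Y_{\mu+1}\otimes_K Y_{\mu-1}\otimes_K\cdots\otimes_K Y_1)(B^{(\mu)})^\top$. Multiplying $X^{(\mu)}=Y_\mu V_\mu^\top$ on the right by $V_\mu$ and using $V_\mu^\top V_\mu=I_{r_\mu}$ produces the announced identity.

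The main obstacle is clerical rather than conceptual: one must verify that the order $d,\dots,\mu+1,\mu-1,\dots,1$ of the Kronecker factors is precisely the one consistent with the column indexing $\overline{i_1,\dots,i_{\mu-1},i_{\mu+1},\dots,i_d}$ defined in~\eqref{eq2:1}, so that the matrix $Y_d\otimes_K\cdots\otimes_K Y_1$ places its columns in the positions expected by $(B^{(\mu)})^\top$. This is exactly the ordering already chosen in the paper for the unfolding and TTM formulas, so once it is fixed there is nothing further to check, and the argument of Proposition~\ref{lmT:1} transports \emph{verbatim} to the general $d$-order setting. A cleaner alternative would be to proceed by induction on $d$ after grouping modes, but the direct transposition is lighter and makes the parallel with the three-order case completely transparent.
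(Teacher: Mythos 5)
Your proposal is correct and follows essentially the same route as the paper's own proof: expand $\vec{X}$ in the HOSVD basis, unfold along the mode, identify $\vec{v}^\mu_{i_\mu}$ by comparison with the thin SVD $X^{(\mu)}=Y_\mu V_\mu^\top$, insert the ratios $\sigma^{(\alpha)}_{i_\alpha}/\sigma^{(\alpha)}_{i_\alpha}$ to pass from $U_\alpha$ to $Y_\alpha$ and from $\vec{C}$ to $\vec{B}$, and conclude by right-multiplying by $V_\mu$. The only cosmetic difference is that you argue for a generic mode $\mu$ directly (with the welcome explicit check of the Kronecker ordering against Equation~\eqref{eq2:1}), whereas the paper writes out mode $1$ and appeals to a permutation of indices for the others.
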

	
	\begin{proof}[\textbf{\upshape Proof:}] The proof is very similar to that of Proposition \ref{lmT:1}, so we give only the main steps. Let start by the first mode principal components. 
	The $1$-mode matricization of $\vec{X}$ in the Tucker basis is expressed with the Kronecker product, getting
  	\begin{equation}
  	\label{eq3:14}
  	X^{(1)} = \sum_{i_1=1}^{r_1} \vec{u}^1_{i_1}\otimes \biggl(\sum_{i_2, \dots, i_d=1}^{r_2,\dots, r_d}C_{i_1\dots i_d}\vec{u}^{d}_{i_d}\otimes_K\dots\otimes_K\vec{u}^{2}_{i_2}\biggr).
  	\end{equation}
		The PCA of ${X}^{(1)}$ leads to
  	\begin{equation}
  	\label{eq3:15}
  	{X}^{(1)} = Y_1V_1^\top = \sum_{i_1=1}^{r_1}\sigma^{(1)}_{i_1} \vec{u}^{1}_{i_1}\otimes \vec{v}^{1}_{i_1}
  	\end{equation}
		with $\vec{v}_{i_1}^1$ the $i_1$-th unitary column of $V_1\in\ort(n_{\ne 1}\times r_1)$ where $n_{\ne 1} = \prod_{\mu=2}^d n_{\mu}$ and $\sigma^{(1)}_{i_1}\vec{u}^{1}_{i_1}$ the $i_1$-th column of $Y_1\in\R^{n_1\times r_1}$. 
		Comparing Equations \eqref{eq3:14} and \eqref{eq3:15} for a fixed index $i_1$, we identify $\vec{v}^1_{i_1}$ with a linear combination of the Kronecker product of $\vec{u}^{\mu}_{i_\mu}$ for $\mu\in\sqb{2}{d}$ scaled by $\sigma^{(1)}_{i_1}$ as
  	\begin{equation}
  	\label{eq3:16}
  	\vec{v}^{1}_{i_1} = \frac{1}{\sigma^{(1)}_{i_1}}\sum_{i_2, \dots, i_d = 1}^{r_2,\dots,r_d}C_{i_1\dots i_d}\vec{u}^{d}_{i_d}\otimes_K\dots\otimes_K \vec{u}^2_{i_2}.
  	\end{equation}
		Introducing in Equation \eqref{eq3:16} the singular values $\sigma^{(\mu)}_{i_\mu}$, we express the $i_1$-th column of $V_1$ as a linear combination of the Kronecker product of the $i_\mu$-th column of $Y_\mu$ for every $i_\mu\in\sqb{2}{d}$ as
		\begin{equation}
		\label{eq3:17}
      \begin{split}
      \vec{v}^{1}_{i_1} &=\sum_{i_2, \dots, i_d = 1}^{r_2,\dots,r_d}\frac{C_{i_1\dots i_d}}{\sigma^{(1)}_{i_1}\sigma^{(2)}_{i_2}\dots \sigma^{(d)}_{i_d}} \,\sigma^{(2)}_{i_2}\dots\sigma^{(d)}_{i_d}\,\vec{u}^{d}_{i_d}\otimes_K\dots\otimes_K \vec{u}^2_{i_2}\\
      &=\sum_{i_2, \dots,i_d=1}^{r_2,\dots,r_d}B_{i_1\dots i_d}\,\vec{y}^d_{i_d}\otimes_K\dots\otimes_K\vec{y}^2_{i_2}
      \end{split}
		\end{equation}
		with $\vec{B} = (\Sigma_1^{-1},\dots,\Sigma_d^{-1})\vec{C}$ and $\vec{y}^{\mu}_{i_\mu}$ the $i_\mu$-th column of $Y_\mu$ for $\mu\in\sqb{2}{d}$. Thanks to the correspondence between Kronecker product and matricization, the right-hand-side of Equation \eqref{eq3:17} is expressed as the matrix-product between $Y_d\otimes_K\dots\otimes_K Y_2$ and tensor $\vec{B}$ matricized with respect to mode $1$ and transposed as
  	\begin{equation}
  	\label{eq3:18}
  	V_1 =(Y_d\otimes_K\dots\otimes_K Y_2)({B}^{(1)})^\top.
  	\end{equation}
		Multiplying Equation \eqref{eq3:15} on the right by $V_1$ yields $Y_1 = {X}^{(1)}V_1$ and replacing $V_1$ by its expression of Equation \eqref{eq3:18}, it finally follows
		\begin{equation*}
		 Y_1 = {X}^{(1)}V_1 = {X}^{(1)}(Y_d\otimes_K\dots\otimes_K Y_2)({B}^{(1)})^\top.
		\end{equation*}
		The other relations follow straightforwardly from this one, permuting the indices coherently.
	\end{proof}

\subsection{Extension to generic metric space for $d$-order tensors\label{sec:4}}
  As discussed in Section~\ref{sec:2}, the minimization problem faced with the HOSVD algorithm is expressed by the Frobenious norm, induced by an inner product. The standard inner product is defined by the identity matrix. However, whatever SPD matrix induces an inner product and the associated metric norm on a vector space, which is therefore isomorphic to the standard Euclidean space. We emphasize in this section the role of the metric on the relationships between point clouds, using this isomorphic relationship between Euclidean spaces with different inner products.
  
  \par Let $\vec{F}\in\mathcal{S}_M$ where $\mathcal{S}_M$ is the tensor space $E_1\otimes \dots\otimes E_d$ with $\text{dim}(E_\mu)=n_\mu$ endowed with the inner product induced by SPD matrices $M_\mu$ of size $n_\mu$. Let $\mathcal{S}$ be the Euclidean tensor space $E_1\otimes \dots\otimes E_d$ endowed with the standard inner product. As already mentioned, we can move back to $\mathcal{S}$, thanks to the isometry define as $\nu: \mathcal{S}_M\rightarrow \mathcal{S}$, such that $\vec{X}=\nu(\vec{F}) =(M_1, \dots, M_d)\vec{F}$. Let now $\vec{X}\in\S$ be the HOSVD approximation of $\nu(\vec{F})$ at multi-linear rank $\vec{r}$ and let $(U_\mu)_\mu \in\ort(n_\mu \times r_\mu)$ be the associated basis. $\Sigma_\mu$ is the singular value matrix of $X^{(\mu)}$ and define $Y_\mu = U_\mu\Sigma_\mu$ the principal components of mode $\mu$ for tensor $\vec{X}$ for every $\mu\in\sqb{1}{d}$. Let $W_\mu = M^{-1}_\mu Y_\mu$ be the principal components of $\vec{F}$ in the tensor space $\mathcal{S}_M$. In the following proposition, we link the sets of principal components in the metric space $\mathcal{S}_M$. As previously, for the sake of clarity the result is first proved for $d=3$ and afterwards geralized to whatever order $d$. 
	
	\begin{proposition}
		\label{lmT:2}
		Let $\vec{F}$ be a tensor in $\mathcal{S}_M$ and let $\vec{X}$ be its image through the isometry $\nu$ in the standard tensor space $\mathcal{S}$. Let $\vec{C}$ be the HOSVD core tensor of $\vec{X}$ at multi-linear rank $\vec{r}$ such that ${r}_\mu \le \text{rank}(X^{(\mu)})$ for $\mu\in\{1,2,3\}$. Then for $W_\mu$ the principal component of mode $\mu$ of $\vec{F}$ in the metric tensor space $\mathcal{S}_M$ it holds
		\begin{equation*}
			\begin{split}
			W_1 &= F^{(1)}(M_3^2W_3\otimes_K M_2^2W_2)(B^{(1)})^\top\\
			W_2 &= F^{(2)}(M^2_3W_3\otimes_K M^2_1W_1)({B}^{(2)})^\top\\
			W_3 &= F^{(3)}(M^2_2W_2\otimes_K M^2_1W_1)({B}^{(3)})^\top
			\end{split}
		\end{equation*}
		with $\vec{B} = (\Sigma_1^{-1},\Sigma_2^{-1},\Sigma_3^{-1})\vec{C}$.
	\end{proposition}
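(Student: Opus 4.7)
The plan is to reduce Proposition~3 to Proposition~1 by pushing everything through the isometry $\nu$. Since $\vec{X} = \nu(\vec{F}) = (M_1, M_2, M_3)\vec{F}$ lives in the standard Euclidean tensor space $\mathcal{S}$, Proposition~1 applies directly to $\vec{X}$, giving the three relations $Y_\mu = X^{(\mu)}(Y_{\alpha}\otimes_K Y_{\beta})(B^{(\mu)})^\top$ (for the appropriate indices). The job is then to rewrite each occurrence of $X^{(\mu)}$ and $Y_\mu$ in terms of $F^{(\mu)}$ and $W_\mu$, and to check that the $M_\mu$ factors collapse into the $M_\mu^2$ factors appearing in the statement.

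The first ingredient is the matricization formula~(1) applied to $\vec{X} = (M_1,M_2,M_3)\vec{F}$: since the $M_\mu$ are symmetric,
\begin{equation*}
X^{(1)} = M_1 F^{(1)}(M_3 \otimes_K M_2), \qquad X^{(2)} = M_2 F^{(2)}(M_3 \otimes_K M_1),\qquad X^{(3)} = M_3 F^{(3)}(M_2 \otimes_K M_1).
\end{equation*}
The second ingredient is the definition $W_\mu = M_\mu^{-1}Y_\mu$, equivalently $Y_\mu = M_\mu W_\mu$. Substituting these two facts into the mode-$1$ identity of Proposition~1 yields
\begin{equation*}
M_1 W_1 \;=\; M_1 F^{(1)}(M_3\otimes_K M_2)\bigl(M_3 W_3 \otimes_K M_2 W_2\bigr)(B^{(1)})^\top.
\end{equation*}

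The only non-bookkeeping step is then the Kronecker mixed-product property $(A\otimes_K B)(C\otimes_K D) = (AC)\otimes_K(BD)$, which collapses the two Kronecker factors into $M_3^2 W_3 \otimes_K M_2^2 W_2$. Left-multiplying by $M_1^{-1}$ clears the prefactor and gives exactly the claimed expression for $W_1$. The relations for $W_2$ and $W_3$ follow by an identical argument, merely permuting the indices.

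I do not expect any genuine obstacle: the argument is a direct transport of Proposition~1 through the isometry $\nu$, and the only piece of care needed is ensuring the Kronecker factors appear in the correct (reverse) order dictated by formula~(1) so that the mixed-product identity pairs the right matrices together. Symmetry of each $M_\mu$ is used implicitly when suppressing transposes on the Kronecker blocks.
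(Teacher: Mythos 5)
Your proposal is correct and follows essentially the same route as the paper's own proof: apply Proposition~\ref{lmT:1} to $\vec{X}=\nu(\vec{F})$, rewrite $X^{(\mu)}$ via the matricization identity~\eqref{rk:1} using symmetry of the $M_\mu$, substitute $Y_\mu=M_\mu W_\mu$, and cancel the invertible $M_1$ on the left. The only difference is that you make the Kronecker mixed-product step explicit where the paper leaves it implicit, which is a point in your favour.
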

	
	\begin{proof}[\textbf{\upshape Proof:}] This result comes straightforwardly from Proposition~\ref{lmT:1} proof by introducing the metrics matrices. For completeness, we illustrate the proof focusing on the link for the principal components of the first mode. Let $(U_\mu)_{\mu = 1,2,3}$ be the HOSVD basis of $\vec{X}$ at multi-linear rank $\vec{r}$ with $U_\mu \in \ort(n_\mu \times r_\mu)$. Let $Y_\mu = U_\mu\Sigma_\mu$ be the principal components of mode $\mu$ for tensor $\vec{X}$, where $\Sigma_\mu$ is the singular values matrix of $X^{(\mu)}$. Proposition \ref{lmT:1} yields 
		\begin{equation}
		\label{eq3:6}
		Y_1 = {X}^{(1)}(Y_3\otimes_K Y_2)(B^{(1)})^\top
		\end{equation}
		with $\vec{B} = (\Sigma_1^{-1},\Sigma_2^{-1},\Sigma_3^{-1})\vec{C}$. Notice that $\vec{X} = \nu(\vec{F}) = (M_1,M_2,M_3)\vec{F}$. So thanks to Equation~\eqref{rk:1}, we express $X^{(1)}$ in function of $F^{(1)}$ as 
		\begin{equation*}
			X^{(1)} = M_1F^{(1)}(M^\top_3\otimes_K M^\top_2)
		\end{equation*}
		and replacing it into \eqref{eq3:6}, it gets 
		\begin{equation}
		\label{eq3:7}
		\begin{split}
		Y_1 = M_1F^{(1)}(M_3\otimes_K M_2)(Y_3\otimes_K Y_2)(B^{(1)})^\top
		\end{split}
		\end{equation}
		since $M_\mu$ are SPD matrices. Remarking that $Y_\mu = M_\mu W_\mu$ from the $W_\mu$ definition, substituting it in the Equation \eqref{eq3:7} we obtain
		\begin{equation}
		\label{eq3:8}
		M_1 W_1 = M_1F^{(1)}( M_3\otimes_K M_2) (M_3W_3 \otimes_K M_2 W_2 )(B^{(1)})^\top.
		\end{equation}
		Since $M_1$ is SPD and consequently invertible, from the previous equation it follows the thesis. 
		\par The other relations follow straightforwardly from this proof, permuting the indices coherently.
	\end{proof}
	This result is easily generalized to $d$-order tensors as follows.
  	
  	\begin{proposition}
		\label{lmT:4}
		Let $\vec{F}$ be a tensor in $\mathcal{S}_M$ and let $\vec{X}$ be its image through the isometry $\nu$ in the standard tensor space $\mathcal{S}$. Let $\vec{C}$ be the HOSVD core tensor of $\vec{X}$ at multi-linear rank $\vec{r}$ such that ${r}_\mu \le \text{rank}(X^{(\mu)})$ for for $\mu\in\sqb{1}{d}$. Then for $W_\mu$ the principal component of mode $\mu$ of $\vec{F}$ in the metric tensor space $\mathcal{S}_M$ it holds
		\begin{equation*}
				W_{\mu} = F^{(1)}(M_d^2W_d\otimes_K\dots\otimes_K M_{\mu +1}^2W_{\mu +1}\otimes_KM_{\mu-1}^2W_{\mu-1}\otimes_K\dots \otimes_KM_{1}^2W_{1})(B^{(\mu)})^\top
		\end{equation*}
		with $\vec{B} = (\Sigma_1^{-1},\dots,\Sigma_d^{-1})\vec{C}$ for every $\mu\in\sqb{1}{d}$.
	\end{proposition}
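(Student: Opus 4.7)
The plan is to replicate the argument of Proposition~\ref{lmT:2} by combining the $d$-mode identity of Proposition~\ref{lmT:3} with the change of variables induced by the isometry $\nu$. Fix a mode $\mu \in \sqb{1}{d}$. First, I would apply Proposition~\ref{lmT:3} to the Euclidean tensor $\vec{X} = \nu(\vec{F})$, which yields
\begin{equation*}
Y_\mu = X^{(\mu)}(Y_d \otimes_K \dots \otimes_K Y_{\mu+1} \otimes_K Y_{\mu-1} \otimes_K \dots \otimes_K Y_1)(B^{(\mu)})^\top,
\end{equation*}
with $\vec{B} = (\Sigma_1^{-1},\dots,\Sigma_d^{-1})\vec{C}$.

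Next, I would express $X^{(\mu)}$ in terms of $F^{(\mu)}$ by applying Equation~\eqref{rk:1} to the TTM product $\vec{X} = (M_1,\dots,M_d)\vec{F}$. Because each $M_\nu$ is symmetric (being SPD), the transposes on the Kronecker factors vanish, giving
\begin{equation*}
X^{(\mu)} = M_\mu F^{(\mu)} (M_d \otimes_K \dots \otimes_K M_{\mu+1} \otimes_K M_{\mu-1} \otimes_K \dots \otimes_K M_1).
\end{equation*}

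Substituting this back produces two consecutive Kronecker-product matrices acting on $F^{(\mu)}$ from the right. Using $Y_\nu = M_\nu W_\nu$ for each $\nu \ne \mu$ and applying the mixed-product property $(A_1 \otimes_K A_2)(B_1 \otimes_K B_2) = (A_1 B_1) \otimes_K (A_2 B_2)$ factor by factor, the two Kronecker products collapse into a single one whose $\nu$-th slot becomes $M_\nu \cdot M_\nu W_\nu = M_\nu^2 W_\nu$. On the left-hand side I use $Y_\mu = M_\mu W_\mu$ and left-multiply by $M_\mu^{-1}$, which is legitimate since $M_\mu$ is SPD and hence invertible, yielding the claimed identity for $W_\mu$.

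The proof is essentially routine once Proposition~\ref{lmT:3} is in hand, so no step presents a real obstacle. The point requiring the most care is the bookkeeping of the reversed Kronecker ordering (indices running from $d$ down to $1$, skipping $\mu$), which must be kept consistent between the matricization formula for $X^{(\mu)}$ and the Kronecker product of the principal components so that the mixed-product identity correctly pairs up matching modes. Once this is handled, the argument is invariant under permutation of modes and thus yields the claim for every $\mu \in \sqb{1}{d}$.
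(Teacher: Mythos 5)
Your proposal is correct and follows essentially the same route as the paper, which simply states that Proposition~\ref{lmT:4} is a direct consequence of Propositions~\ref{lmT:3} and~\ref{lmT:2} and omits the details; you have merely written out the generalization of the proof of Proposition~\ref{lmT:2} (apply Proposition~\ref{lmT:3} to $\vec{X}=\nu(\vec{F})$, unfold via Equation~\eqref{rk:1}, substitute $Y_\nu = M_\nu W_\nu$, collapse the Kronecker products by the mixed-product rule, and cancel $M_\mu$). Note in passing that your derivation correctly produces $F^{(\mu)}$ on the right-hand side, whereas the statement as printed has a typographical $F^{(1)}$.
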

	
	\begin{proof}[\textbf{\upshape Proof:}]
	The proof is a direct consequence of Proposition \ref{lmT:3} and Proposition \ref{lmT:2}, so the details are omitted here.
	\end{proof}

\section{Geometric view for multiway correspondence analysis\label{sec:5}}
  In this last section, we transport the previous results in the correspondence analysis framework. We firstly clarify the Euclidean space and its metric where we set our problem. Then we make explicit the point cloud relation in this particular context. As final outcome we are able to prove the correspondence between the point clouds attached to each mode.
  
  \par In accordance with the correspondence analysis framework, we consider a $d$-way contingency table $\vec{T}\in\N^{n_1\times \dots\times n_d}$. The first step for performing CA is scaling $\vec{T}$ by the sum of all its components setting a new frequency tensor 
	\[\vec{F} = \frac{1}{\sum_{i_1, \dots, i_d = 1}^{n_1,\dots,n_d}{T}_{i_1\dots i_d}}\vec{T}.\] 
	We first clarify the tensor space we will work with. Let $\vec{f}^{\mu}$ be the marginal of mode $\mu$, i.e., the vector whose components are the sums of the slices in mode $\mu$ for all $\mu\in\sqb{1}{d}$. For example the $i$-th element of $\vec{f}^1$ is
	\[{f}^{1}_{i_1} =\sum_{i_2, \dots , i_d =1}^{n_2,\dots, n_d}{F}_{i_1\dots i_d}\qquad\text{for all}\qquad i_1\in\sqb{1}{n_1}.\] 
	We assume that $\vec{f}^{\mu}$ has no zero component and by construction ${f}^{\mu}_{i_\mu} > 0 $ for every $\mu\in\sqb{1}{d}$. Let define $D_\mu = diag(\sqrt{\vec{f}^{\mu}})\in\R^{n_\mu\times n_\mu}$ for each $\mu \in\sqb{1}{d}$ and let assume that $\vec{F}$ belongs to $\R^{n_1\times \dots\times n_d}$ endowed by the metric induced by the matrices $(D^{-1}_1,\dots,D^{-1}_d)$, since $D_\mu^{-1}$ is SPD for every $\mu\in\sqb{1}{d}$. We denote by $\S_M$ this metric space and by $\S$ the tensor space $\R^{n_1\times \dots\times n_d}$ endowed with the standard inner product. Under this assumption, let $\nu$ the isometry between the spaces $\S_M$ and $\S$ and let $\vec{X}=\nu(\vec{F}) = (D^{-1}_1,\dots,D^{-1}_d)\vec{F}$. The general element of tensor $\vec{X}$ is written 
	\begin{equation*}
	  {X}_{i_1\dots i_d} = \frac{{F}_{i_1\dots i_d}}{\sqrt{{f}^1_{i_1}\dots{f}^d_{i_d}}}.
	\end{equation*}
	Performing the HOSVD over tensor $\vec{X}$ at multi-linear rank $\vec{r}$ leads to a new orthogonal basis $(U_\mu)_{\mu=1,\dots,d}$, to a core tensor $\vec{C}$ and to the principal components $Y_\mu = U_\mu \Sigma_\mu$ for every $\mu\in\{1,\dots, d\}$ in the standard tensor space. Focusing on the principal components $W_\mu = D_\mu Y_\mu$ of tensor $\vec{F}$ in $\S_M$, Proposition~\ref{lmT:4} entails
	\begin{equation*}
  		W_{\mu} = F^{(1)}(M_d^2W_d\otimes_K\dots\otimes_K M_{\mu +1}^2W_{\mu +1}\otimes_KM_{\mu-1}^2W_{\mu-1}\otimes_K\dots \otimes_KM_{1}^2W_{1})(B^{(\mu)})^\top
	\end{equation*}
	where $B^{(\mu)}$ is the matricization of $\vec{B} = (\Sigma^{-1}_1, \dots, \Sigma^{-1}_d)\vec{C}$ for $\mu\in\sqb{1}{d}$. 
	Let $Z_{\mu} = D^{-2}_\mu W_{\mu}$ be the \emph{principal components scaled} by the singular value inverse for $\mu\in\sqb{1}{d}$ in tensor space $\S_M$. Henceforth, we denote by $\vec{z}^{\mu}_{i_\mu}$ the $i_\mu$-th row of $Z_\mu$. 
	Now we prove that each component of vector $\vec{z}^{\mu}_{i_\mu}$ can be expressed as a scaling factor times the barycenter of the linear combinations of the other two scaled principal component rows. We assume $d$ equal to $3$ to facilitate the comprehension of the following proof.
	
	\begin{proposition}
  	\label{lmCA:1}
  	Let $\vec{F}$ be a tensor in the tensor space $\mathcal{S}_M$ endowed with the norm induced by the inner product matrices $D_\mu = \text{diag}(\sqrt{\vec{f}^{\mu}})$ with $\vec{f}^{\mu}$ the $\mu$ mode marginal of $\vec{F}$ for $\mu\in\{1,2,3\}$. Let $Z_\mu\in\R^{n_\mu\times r_\mu}$ be the scaled principal components for tensor $\vec{F}$ of mode $\mu$ in $\S_M$. If $r_\mu= \text{rank}(F^{(\mu)})$ for every $\mu\in\{1,2,3\}$, then
  	\begin{equation*}
	  \begin{split}
	    ({z}^1_i)_{\ell} &= \frac{1}{\sigma^1_{\ell}}\sum_{j,k=1}^{n_2,n_3}\sum_{{m,p}=1}^{r_2,r_3} \dfrac{{F}_{i j k}}{{f}^1_{i}}({z}^3_k)_{p}({z}^2_j)_{m}({B_1})_{\ell mp}\\
	    ({z}^2_j)_{m} &= \frac{1}{\sigma^2_{m}}\sum_{i,k=1}^{n_1,n_3}\sum_{{\ell,p}=1}^{r_1,r_3} \dfrac{{F}_{i j k}}{{f}^2_{j}}({z}^3_k)_{p}({z}^1_i)_{\ell}(B_2)_{\ell m p}\\
	    ({z}^3_k)_{p} &= \frac{1}{\sigma^3_{p}}\sum_{i,j=1}^{n_1,n_2}\sum_{{\ell,m}=1}^{r_1,r_2} \dfrac{{F}_{i j k}}{{f}^3_{k}}({z}^1_i)_{\ell}({z}^2_j)_{m}(B_3)_{\ell mp}
	  \end{split}
  	\end{equation*}
  	where $B_\mu = \Sigma_\mu B^{(\mu)}$ from Proposition \ref{lmT:2}.
  \end{proposition}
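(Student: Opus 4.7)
The plan is to derive the scalar formula as a direct specialization of Proposition~\ref{lmT:2} to the correspondence-analysis metric, where $M_\mu = D_\mu^{-1}$ and hence $M_\mu^2 = D_\mu^{-2}$. Substituting this into the mode-$1$ identity of Proposition~\ref{lmT:2} yields $W_1 = F^{(1)}(D_3^{-2}W_3 \otimes_K D_2^{-2}W_2)(B^{(1)})^\top$, with $\vec{B} = (\Sigma_1^{-1},\Sigma_2^{-1},\Sigma_3^{-1})\vec{C}$. By the definition $Z_\mu = D_\mu^{-2}W_\mu$ (equivalently $W_\mu = D_\mu^2 Z_\mu$), both Kronecker factors on the right collapse to $Z_3$ and $Z_2$, while the left-hand side becomes $D_1^2 Z_1$. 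Left-multiplying by $D_1^{-2}$ gives the compact matrix form $Z_1 = D_1^{-2}F^{(1)}(Z_3 \otimes_K Z_2)(B^{(1)})^\top$, and the other two cases follow by the same permutation of indices used in Propositions~\ref{lmT:1} and~\ref{lmT:2}.

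Next, I would extract the $(i,\ell)$-entry of this matrix identity and interpret each factor. Since $D_1^{-2}$ is diagonal with entries $1/f^1_i$, left multiplication only scales row $i$ of $F^{(1)}$ by $1/f^1_i$. The column of $Z_3 \otimes_K Z_2$ indexed by $\overline{jk}$ has, in the row indexed by $\overline{mp}$, the scalar $(z^3_k)_p (z^2_j)_m$, following the ordering convention of Equation~\eqref{eq2:1}. The entry $(B^{(1)})_{\ell,\overline{mp}}$ is by construction $B_{\ell m p} = C_{\ell m p}/(\sigma^1_\ell \sigma^2_m \sigma^3_p)$, obtained by unfolding $\vec{B} = (\Sigma_1^{-1},\Sigma_2^{-1},\Sigma_3^{-1})\vec{C}$ via Equation~\eqref{rk:1}.

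The final step is to pull the scalar $1/\sigma^1_\ell$ out of the $B$-factor, so that the residual $C_{\ell m p}/(\sigma^2_m \sigma^3_p)$ can be identified with $(B_1)_{\ell m p}$ for $B_1 = \Sigma_1 B^{(1)}$, exactly as stated. Rewriting $F^{(1)}_{i,\overline{jk}}$ as $F_{ijk}$ and reordering the summation then yields the claimed identity for $(z^1_i)_\ell$. The barycentric character of the result becomes visible at this point: since $\sum_{j,k} F_{ijk}/f^1_i = 1$, the coefficients $F_{ijk}/f^1_i$ form the conditional profile of the $i$-th slice of mode~$1$, so $\sigma^1_\ell (z^1_i)_\ell$ is indeed a weighted average of quantities built from the rows of $Z_2$ and $Z_3$, generalizing the classical two-way CA barycentric formula.

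I do not expect any deep obstacle beyond index bookkeeping. The only delicate point is maintaining consistency between the Kronecker index convention fixed by Equation~\eqref{eq2:1} and the corresponding unfolding of $\vec{B}$: a swapped pair would produce a spurious transposition and would break the identification of $(B_1)_{\ell m p}$. The assumption $r_\mu = \operatorname{rank}(F^{(\mu)})$ is used implicitly here, as it guarantees that every $\sigma^\mu_{i_\mu}$ appearing in $B$ is strictly positive, so the factors $1/\sigma^\mu_{i_\mu}$ are legitimate. With that bookkeeping taken care of, the proof is a mechanical specialization of Proposition~\ref{lmT:2}, and the substantive content is the barycentric reading, which is precisely what motivated introducing the scaling $Z_\mu = D_\mu^{-2} W_\mu$.
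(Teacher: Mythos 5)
Your proposal is correct and follows essentially the same route as the paper's proof: specialize Proposition~\ref{lmT:2} to the CA metric $M_\mu = D_\mu^{-1}$, rewrite the Kronecker factors as $Z_3$ and $Z_2$, left-multiply by $D_1^{-2}$, substitute $B^{(1)} = \Sigma_1^{-1}B_1^{(1)}$, and read off the $(i,\ell)$-entry to obtain the barycentric formula, with the remaining modes handled by index permutation. The only cosmetic difference is that you factor the $1/\sigma^1_\ell$ out of the unfolded $\vec{B}$ entrywise rather than writing the matrix identity $B^{(1)}=\Sigma_1^{-1}B_1^{(1)}$ first, which is the same computation.
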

  	 
  \begin{proof}[\textbf{\upshape Proof:}] 
  We describe the proof for the $i$-th row of $Z_1$ with $i\in\sqb{1}{n_1}$. From Proposition \ref{lmT:2}, under the CA metric choice, it follows that the principal components of $\vec{F}$ in the tensor space $\S_M$ satisfy the relation
  \begin{equation*}
    W_1 = F^{(1)}(D_3^{-2}W_3\otimes_K D^{-2}_2W_2)(B^{(1)})^\top = F^{(1)}(Z_3\otimes_K Z_2)(B^{(1)})^\top. 
  \end{equation*}
  Multiplying on the left this last equation by $D^{-2}_1$, we obtain
  \begin{equation*}
    Z_1 = D^{-2}_1W_1 = D^{-2}_1F^{(1)}(Z_3\otimes_K Z_2)(B^{(1)})^\top
    \end{equation*}
  and since $B^{(1)}=\Sigma^{-1}_1{B}^{(1)}_1$, it gets
  \begin{equation}
  \label{eq3:10}
    Z_1 = D^{-2}_1F^{(1)}(Z_3\otimes_K Z_2)(\Sigma_1^{-1}B_1^{(1)})^\top.
  \end{equation}
  Making explicit the $\ell$-th component of $\vec{z}^1_i$, the $i$-th row of $Z_1$, from Equation \eqref{eq3:10}, we have
  \begin{equation*}
    \begin{split}
    ({z}^1_i)_{\ell} =(Z_1)_{i\ell}&=
    \sum_{j,k=1}^{n_2,n_3}\sum_{{m,p}=1}^{r_2,r_3} (D^{-2}_1F^{(1)})_{i j k}(Z_3\otimes_K Z_2)_{\overline{jk}\,\overline{mp}}(\Sigma_1^{-1}B_1^{(1)})_{\ell\,\overline{mp}}\\
    &=
    \sum_{j,k=1}^{n_2,n_3}\sum_{{m,p}=1}^{r_2,r_3} \dfrac{{F}_{i j k}}{{f}^1_{i}}(Z_3)_{kp}(Z_2)_{jm}\dfrac{(B_1)_{\ell m p}}{\sigma^1_{\ell}}\\
    &= \frac{1}{\sigma^1_{\ell}}\sum_{j,k=1}^{n_2,n_3}\sum_{{m,p}=1}^{r_2,r_3} \dfrac{{F}_{i j k}}{{f}^1_{i}}({z}^3_k)_{p}({z}^2_j)_{m}(B_1)_{\ell m p}
    \end{split}
  \end{equation*}
  by the definition of $\vec{z}^2_j$ and $\vec{z}^3_k$ for every $i\in\sqb{1}{n_1}$, $j\in\sqb{1}{n_2}$, $k\in\sqb{1}{n_3}$ and $\ell\in\sqb{1}{r_1}$. This final equation can be read as a mutual barycenter relation scaled by the inverse of the corresponding singular value. Indeed there is a list of weights terms which sum to 1, i.e., $\sum_{{j,k}=1}^{n_2, n_3}\dfrac{{F}_{i j k}}{{f}^1_{i}} = \frac{{f}^1_i}{{f}^1_i}=1$ times a linear combination expressed through $\vec{B}_1$ of $\vec{z}^2_j$ and $\vec{z}^3_k$. Moving back to the geometric perspective, the Proposition~\ref{lmCA:1} states that the scaled coordinates of a point cloud correspond to the barycenter of the other two point cloud scaled coordinates.
  \par The two remaining barycentric relations follow from this proof, permuting coherently the indices.
  \end{proof}
  Correspondence in CA refers to the correspondence of point cloud coordinates through the scaled barycentric relation. We proved that this well known relation in matrix framework is holding also in the tensor one, through HOSVD. Therefore we propose to refer to it as \emph{correspondence analysis from HOSVD}.
  
  \par This final proposition is extended and verified straightforwardly for $d$-order tensors.
  \begin{proposition}
  \label{lmCA:2}
    Let $\vec{F}$ be a tensor in the tensor space $\mathcal{S}_M$ endowed with the norm induced by the inner product matrices $D_\mu = \text{diag}(\sqrt{\vec{f}^{\mu}})$ with $\vec{f}^{\mu}$ the $\mu$ mode marginal of $\vec{F}$ for every $\mu\in\sqb{1}{d}$. Let $Z_\mu\in\R^{n_\mu\times r_\mu}$ be the scaled principal components for tensor $\vec{F}$ of mode $\mu$ in $\S_M$. If $r_\mu= \text{rank}(F^{(\mu)})$ for $\mu\in\sqb{1}{d}$, then 
    \begin{equation*}
  		({z}^{\mu}_{i_\mu})_{\ell_\mu} = \frac{1}{\sigma^{\mu}_{\ell_\mu}}\sum_{\substack{\eta =1\\\eta \ne \mu}}^{d}\sum_{i_\eta=1}^{n_\eta}\sum_{\ell_\eta = 1}^{r_\eta} \dfrac{{F}_{i_1\dots i_d}}{{f}^{\mu}_{i_\mu}}({z}^d_{i_{d}})_{\ell_d}\cdots({z}^{\mu+1}_{i_{\mu+1}})_{\ell_{\mu+1}}({z}^{\mu-1}_{i_{\mu-1}})_{\ell_{\mu-1}}\cdots({z}^2_{i_2})_{\ell_2}(B_\mu)_{\ell_1\dots\ell_d}
  	\end{equation*}
    where $B_\mu = \Sigma_\mu B^{(\mu)}$ from Proposition \ref{lmT:4}.
  \end{proposition}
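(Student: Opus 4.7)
The plan is to reduce Proposition~\ref{lmCA:2} to Proposition~\ref{lmT:4} exactly as Proposition~\ref{lmCA:1} was reduced to Proposition~\ref{lmT:2}, the extra content being only the specialization of the metric to the CA choice $M_\mu = D_\mu^{-1}$ together with the entrywise extraction of the resulting matrix identity. First I would instantiate Proposition~\ref{lmT:4} with $M_\mu = D_\mu^{-1}$, so that $M_\mu^2 = D_\mu^{-2}$, and observe that by the definition $Z_\mu = D_\mu^{-2} W_\mu$ one has $M_\mu^2 W_\mu = Z_\mu$ for every $\mu$. Substituting this into the conclusion of Proposition~\ref{lmT:4} rewrites the relation between principal components in $\S_M$ as
\begin{equation*}
W_\mu = F^{(\mu)}\bigl(Z_d \otimes_K \cdots \otimes_K Z_{\mu+1} \otimes_K Z_{\mu-1} \otimes_K \cdots \otimes_K Z_1\bigr)(B^{(\mu)})^\top.
\end{equation*}

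Next I would multiply both sides on the left by $D_\mu^{-2}$; the left-hand side becomes $Z_\mu$ by definition, while the matrix $D_\mu^{-2} F^{(\mu)}$ on the right has entries $F_{i_1\dots i_d}/f^\mu_{i_\mu}$, i.e., the row-$i_\mu$ profile of the contingency frequency tensor. Factoring $B^{(\mu)} = \Sigma_\mu^{-1} B_\mu$ pulls out the scalar $1/\sigma^\mu_{\ell_\mu}$ when reading off the $(i_\mu, \ell_\mu)$-entry. The remaining task is bookkeeping: write out the $(i_\mu, \ell_\mu)$-entry of the product, using Equation~\eqref{eq2:1} to translate the multi-index $\overline{i_1,\dots,i_{\mu-1},i_{\mu+1},\dots,i_d}$ of the Kronecker product column into the nested sums over $i_\eta$ for $\eta \ne \mu$, and likewise for the column index $\overline{\ell_1,\dots,\ell_{\mu-1},\ell_{\mu+1},\dots,\ell_d}$ of $B_\mu$. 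Matching up factors gives exactly the formula stated in the proposition.

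The only real obstacle is the combinatorial bookkeeping of the Kronecker ordering: the outer Kronecker product is written with indices in \emph{decreasing} order $d, d-1, \dots, \mu+1, \mu-1, \dots, 1$, whereas the multi-index unfolding convention in Equation~\eqref{eq2:1} uses the opposite ordering, and one must check that the two conventions are compatible so that the component $(z^d_{i_d})_{\ell_d}\cdots(z^{\mu+1}_{i_{\mu+1}})_{\ell_{\mu+1}}(z^{\mu-1}_{i_{\mu-1}})_{\ell_{\mu-1}}\cdots(z^1_{i_1})_{\ell_1}$ matches the corresponding entry of the Kronecker product. Apart from that, everything is linear algebra identical in spirit to the $d=3$ case.

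Finally, I would close by interpreting the identity: for each fixed $(\mu, i_\mu, \ell_\mu)$, the weights $F_{i_1\dots i_d}/f^\mu_{i_\mu}$ sum to $1$ when summed over all $i_\eta$ with $\eta \ne \mu$, since $\sum_{i_\eta, \eta \ne \mu} F_{i_1\dots i_d} = f^\mu_{i_\mu}$ by definition of the marginal. Hence the right-hand side is genuinely a barycenter (scaled by $1/\sigma^\mu_{\ell_\mu}$) of the $B_\mu$-combinations of scaled principal-component coordinates of the other $d-1$ modes, which is the $d$-way generalization of the classical CA correspondence relation.
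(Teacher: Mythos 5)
Your proposal is correct and follows essentially the same route as the paper's proof: specialize Proposition~\ref{lmT:4} to the CA metric $M_\mu = D_\mu^{-1}$ so that $M_\mu^2 W_\mu = Z_\mu$, multiply on the left by $D_\mu^{-2}$, factor $B^{(\mu)} = \Sigma_\mu^{-1} B_\mu$, and read off the $(i_\mu,\ell_\mu)$-entry via the unfolding multi-index of Equation~\eqref{eq2:1}. The paper carries this out only for mode $1$ and invokes index permutation for the rest, whereas you treat a generic mode $\mu$ directly and explicitly flag the Kronecker-ordering bookkeeping and the normalization $\sum_{i_\eta,\,\eta\ne\mu} F_{i_1\dots i_d} = f^\mu_{i_\mu}$, but these are elaborations of the same argument rather than a different one.
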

  \begin{proof}[\textbf{\upshape Proof:}] 
  From Proposition \ref{lmT:4}, under the CA metric choice, the $1$st mode principal components of $\vec{F}$ in $\S_M$ are expressed as 
    \begin{equation*}
    W_1 = F^{(1)}(Z_d\otimes_K\dots\otimes_K Z_2)(B^{(1)})^\top. 
    \end{equation*}
  If the previous equation is multiplied by equation by $D^{-2}_1$ and it has ${B}^{(1)}$ replaced by the equivalent $\Sigma^{-1}_1{B}^{(1)}_1$, it gets
    \begin{equation}
    \label{eq4:1}
    Z_1 = D^{-2}_1F^{(1)}(Z_d\otimes_K\dots\otimes_K Z_2)(\Sigma_1^{-1}B_1^{(1)})^\top.
    \end{equation}
  Making explicit the $\ell_1$-th component of $\vec{z}^1_{i_1}$, the $i_1$-th row of $Z_1$, from Equation \eqref{eq4:1}, it follows the thesis, i.e.
  \begin{equation*}
    \begin{split}
    ({z}^1_{i_1})_{\ell_1} = \frac{1}{\sigma^1_{\ell_1}}\sum_{i_2,\dots, i_d=1}^{n_2,\dots,n_d}\sum_{\ell_2,\dots, \ell_d=1}^{r_2,\dots,r_d} \dfrac{{F}_{i_1\dots i_d}}{{f}^1_{i_1}}({z}^d_{i_d})_{\ell_d}\cdots({z}^2_{i_2})_{\ell_2}(B_1)_{\ell_1\dots\ell_d}.
    \end{split}
  \end{equation*}
  The proof for the other modes follows directly from this one permuting coherently the indices.
  \end{proof}

\section{Applications\label{sec:6}}
	In this section we compare the MWCA based on the point cloud relation proved in Section \ref{sec:5} with the classical CA performed on the same data reorganized as a matrix. Indeed, if we assume $\ten{F}\in\R^{n_1\times \dots\times n_d}$ to store the data relative frequencies as a tensor, then their matrix representation is given by $A_k$, the mode $k$-matricization of $\ten{F}$, once mode $k$ has been selected. The MWCA is performed on $\ten{F}$, while CA is performed on $A_k$. Let denote by $\tilde{{F}}^{(k)}$ the $k$-matricization of $\ten{F}$ after applying the isometry, similarly $\tilde{A}_k$ is the outcome of isometry in the CA case. Since the two approaches are different, the isometries transporting $\ten{F}$ and $A_k$ differ and so do $\tilde{A}_k$ and $\tilde{{F}}^{(k)}$. To estimate this discrepancy between the two objects we define the relative error $e(\tilde{{F}}^{(k)}, \tilde{A}_k)$
	\begin{equation}\label{eqn:errorCA-MCWA}
	     e(\tilde{{F}}^{(k)}, \tilde{A}_k) = \frac{||\tilde{{F}}^{(k)} - \tilde{A}_k||}{||\tilde{{F}}^{(k)}||}.
	\end{equation}
	The SVD and HOSVD, over which CA and MWCA relay respectively, provide an orthogonal basis for the matrix or the decomposed tensor, which is unique up to an orthogonal rotation. As proposed in~\cite{BroSVD} and~\cite{BroSignIndeter}, we orient these new basis selecting as leading direction the one where the majority of the data points out. To perform CA and MWCA we used \texttt{python 3.6.9} and the library \texttt{TensorLy 0.6.0}, see~\cite{tensorly}.
	\par We first analyse with these two techniques the data reported in~\cite{greenacre1993Data}. This example is important since it shows that the multiway method results are coherent with those stated with correspondence analysis. Then the multiway barycentric relation is used to interpret an original data-set from the ecological domain~\cite{Malabardata}.
	
	\begin{example}
	  \begin{table}[H]
	    \centering
	    \begin{tabular}{ll ccccc ccc ccccc}
	    \toprule
	     && \multicolumn{5}{c}{\textbf{Males}} &&&& \multicolumn{5}{c}{\textbf{Females}} \\
	    \cmidrule(lr){3-7} \cmidrule(lr){10-15}
	    \thead{\textit{Age}\\ \textit{group}}   && \thead{\textbf{Very} \\ \textbf{good}}  & \thead{\textbf{Good}} & \thead{\textbf{Regular}} & \thead{\textbf{Bad}} & \thead{\textbf{Very}\\ \textbf{bad}} &&&& \thead{\textbf{Very} \\ \textbf{good}}  & \thead{\textbf{Good}} & \thead{\textbf{Regular}} & \thead{\textbf{Bad}} & \thead{\textbf{Very}\\ \textbf{bad}} \\
	    \cmidrule(lr){3-7} \cmidrule(lr){10-15}
	    \textbf{16-24} && $145$ & $402$ & $84$ & $5$ & $3$ &&&& $98$ & $387$ & $83$ & $13$ & $3$\\
	    \textbf{25-34} && $112$ & $414$ & $74$ & $13$ & $2$ &&&& $108$ & $395$ & $90$ & $22$ & $4$\\ 
	    \textbf{35-44} && $80$ & $331$ & $82$ & $24$ & $4$ &&&& $67$ & $327$ & $99$ & $17$ & $4$\\
	    \textbf{45-54} && $54$ & $231$ & $102$ & $22$ & $6$ &&&& $36$ & $238$ & $134$ & $28$ & $10$\\
	    \textbf{55-64} && $30$ & $219$ & $119$ & $53$ & $12$ &&&& $23$ & $195$ & $187$ & $53$ & $18$\\
	    \textbf{65-74} && $18$ & $125$ & $110$ & $35$ & $4$ &&&& $26$ & $142$ & $174$ & $63$ & $16$\\
	    \textbf{+75} && $9$ & $67$ & $65$ & $25$ & $8$ &&&& $11$ & $69$ & $92$ & $41$ & $9$\\
	    \bottomrule
	    \end{tabular}
	    \caption{Data from the Spanish National Health Survey of 1997, see~\cite{greenacre1993Data}}
	    \label{tab:1}
	  \end{table}
	
	In~\cite{greenacre1993} data are reported from a survey over $6731$ people of both genders, from $16$ to over $75$ years old, who were asked to evaluate their health status. Then the answers were organized in a $3$-way table with dimensions $n_1 = 2$, $n_2 = 7$ and $n_3 = 5$, as in Table \ref{tab:1}. On mode~$1$ we have the two genders: male, `M', and female, `F' and on the second mode there are $7$ age groups. On the last mode we set $5$ health grades, from `Very Good' to `Very bad`. The $(1,1,1)$ entry of the multiway table is the number of men between $16$ and $24$ years old who judge `Very good' their health status. Let $\ten{F}\in\R^{n_1\times n_2\times n_3}$ represent the data relative frequencies in tensor format for MWCA. CA is realized over $A_3$ the matricization of $\ten{F}$ with respect to mode $3$, i.e. `health grade', where on the row we set the health categories and on the columns all the possible combinations of ages and gender.
	In Figure \ref{fig:2L} we recover the correspondence analysis of~\cite{greenacre1993}, while in \ref{fig:2R} we display multiway correspondence analysis with the first two columns of $Y_\mu = U_\mu \Sigma_{\mu}$, defined in Section~\ref{sec:5} for $\mu\in\{1,2,3\}$.
	\begin{figure}[h]
     \centering
    	\subfloat[Mode $1$ correspondence analysis.]{\includegraphics[scale =0.5,valign=t,width=0.48\textwidth]{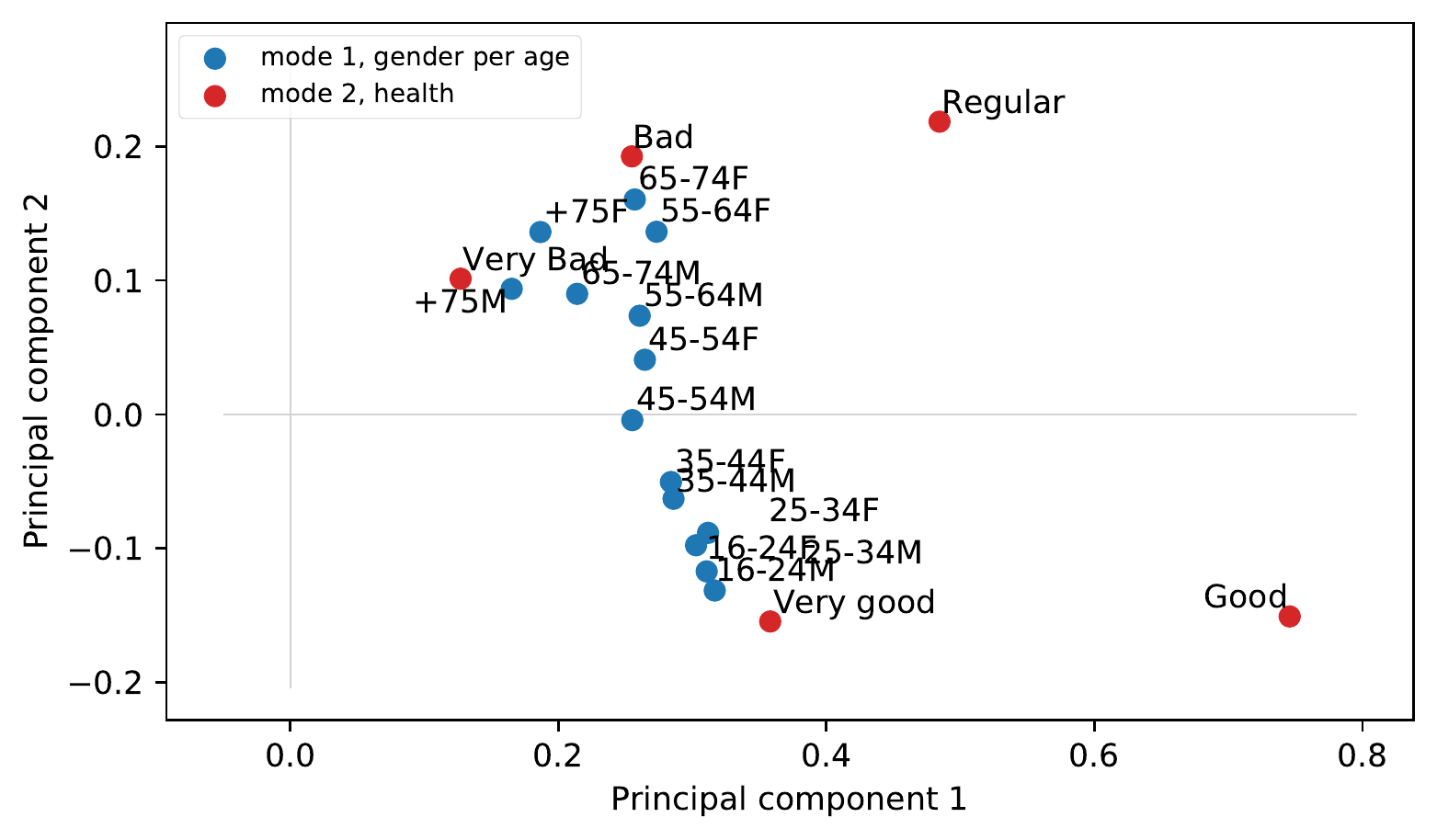}\label{fig:2L}}
    	\quad
    	\subfloat[Multiway correspondence analysis.]{\includegraphics[scale=0.5,valign=t,width=0.48\textwidth]{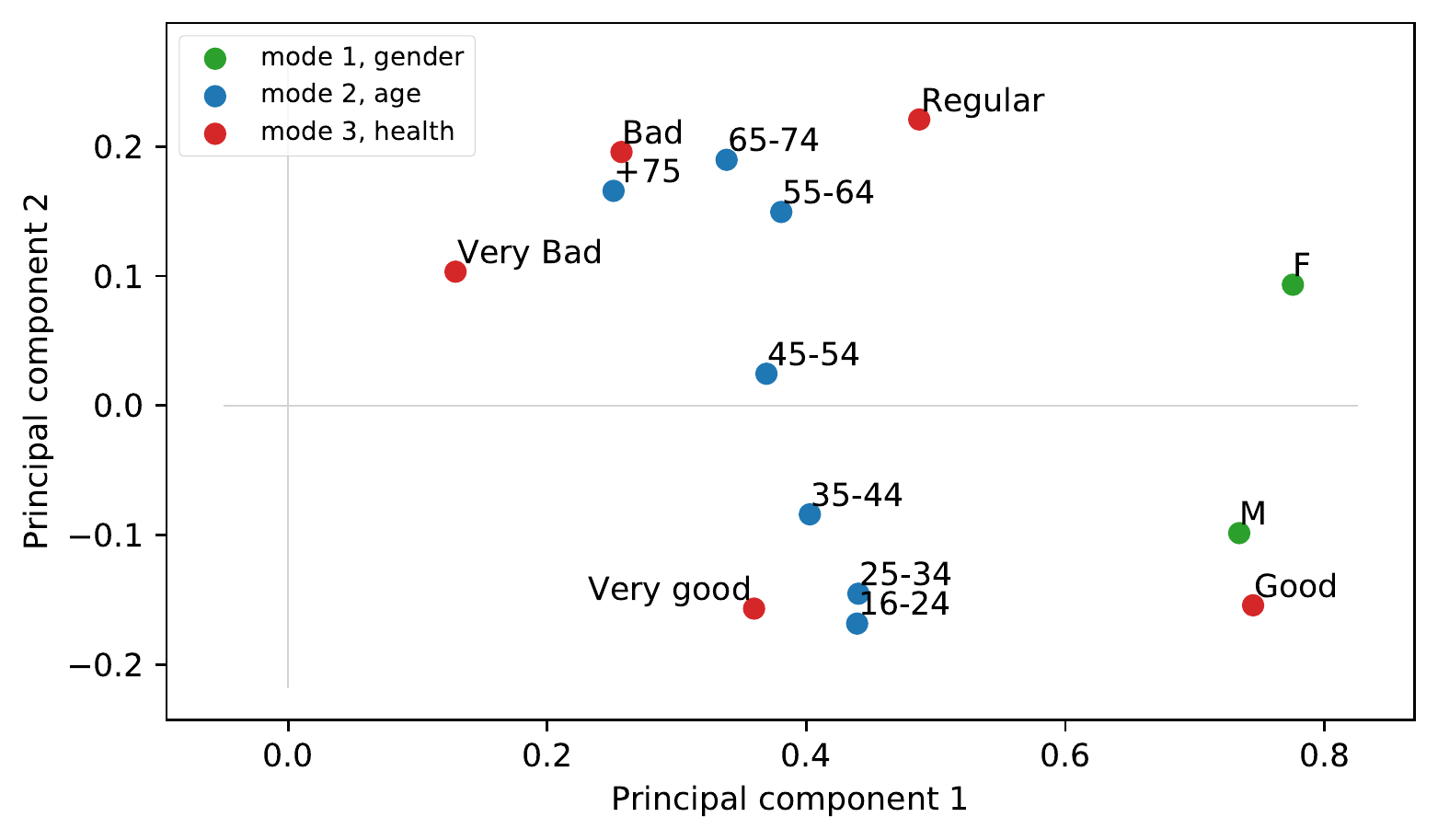}\label{fig:2R}}
     \caption{CA versus MWCA for data of~\cite{greenacre1993Data}.}
     \label{fig:2}
 	\end{figure}
	As in~\cite{greenacre1993}, both plots in Figure \ref{fig:2} display a gradient for the age categories: they distribute from the youngest in the bottom to the oldest near the top. The $5$ health categories are at the extremes of this gradient, the better health levels at the bottom, worst ones at the top. From this we infer that the health evaluation decreases for increasing ages. This phenomenon is highlighted by both the techniques, even if it is clearer in Figure \ref{fig:2R}, thanks to the possibility of analyzing the variables separately. Taking into account the gender in Figure \ref{fig:2R}, let us remark that the male dot is close to the `Good' one. Thanks to the barycentric relation, we know that the closer the points the more correlated they are. So men appear to provide optimistic evaluations of their health status. Women dot stands aside from the other, slightly closer to worst health status, suggesting that female health judging is balanced between the age classes, with a minor inclination toward pessimistic evaluation. In Figure \ref{fig:2L} for increasing ages, male and female dots tend to increase their distances, suggesting that when men and women age, they tend respectively to be more optimistic and pessimistic in evaluating their health status. MWCA plot seems to point out that men are more optimistic than how much pessimistic women are. In contrast from Figure \ref{fig:2R} we cannot conclude easily which age category presents the biggest difference in health evaluation for the two genders, information that can be inferred from \ref{fig:2L}. Lastly comparing the health point cloud in Figures \ref{fig:2L} and \ref{fig:2R}, we observe that their principal coordinates are almost the same and we explain it in terms of relative error from Equation~\eqref{eqn:errorCA-MCWA}. Indeed for this data-set the relative error $e(\tilde{{T}}^{(3)}, \tilde{A}_3) \approx 0.035$ is quite small.
	\end{example}
	
	\begin{example}
	The data-set provided by Malabar project (IFREMER, CNRS, INRAE, Labex COTE)~\cite{Malabardata} is a multiway contingency table of $4$-order. In this metabarcoding project, $32$ water samples have been collected in Arcachon Bay at four locations (Bouee13, Comprian, Jacquets, Teychan), during four seasons, and at two positions in water column (pelagic and benthic). DNA has been extracted, and Operational Taxonomic Units (OTUs) have been built. OTU are expected to correspond to species and, without entering too much into details here, the question which motivates Malabar project is to understand or quantify the role of location, season and water column in the diversity of protists. To address this question, a four way contingency table $\mathbf{T}$ has been built, where ${T}_{i,j,k,\ell}$ is the number of sequences collected at location $j$, in season $\ell$ and position $k$ which belong to OTU $i$. The size of the contingency tensor is $3539 \times 4 \times 2 \times 4$. Our contribution here is to make a first analysis of this data-set, comparing results provided by classical CA and MWCA in Figure \ref{fig:3}.  		 
	
	\begin{figure}
     \centering
    	\subfloat[Correspondence Analysis of mode $1$.]{\includegraphics[scale =0.5, valign=t, width=0.48\textwidth]{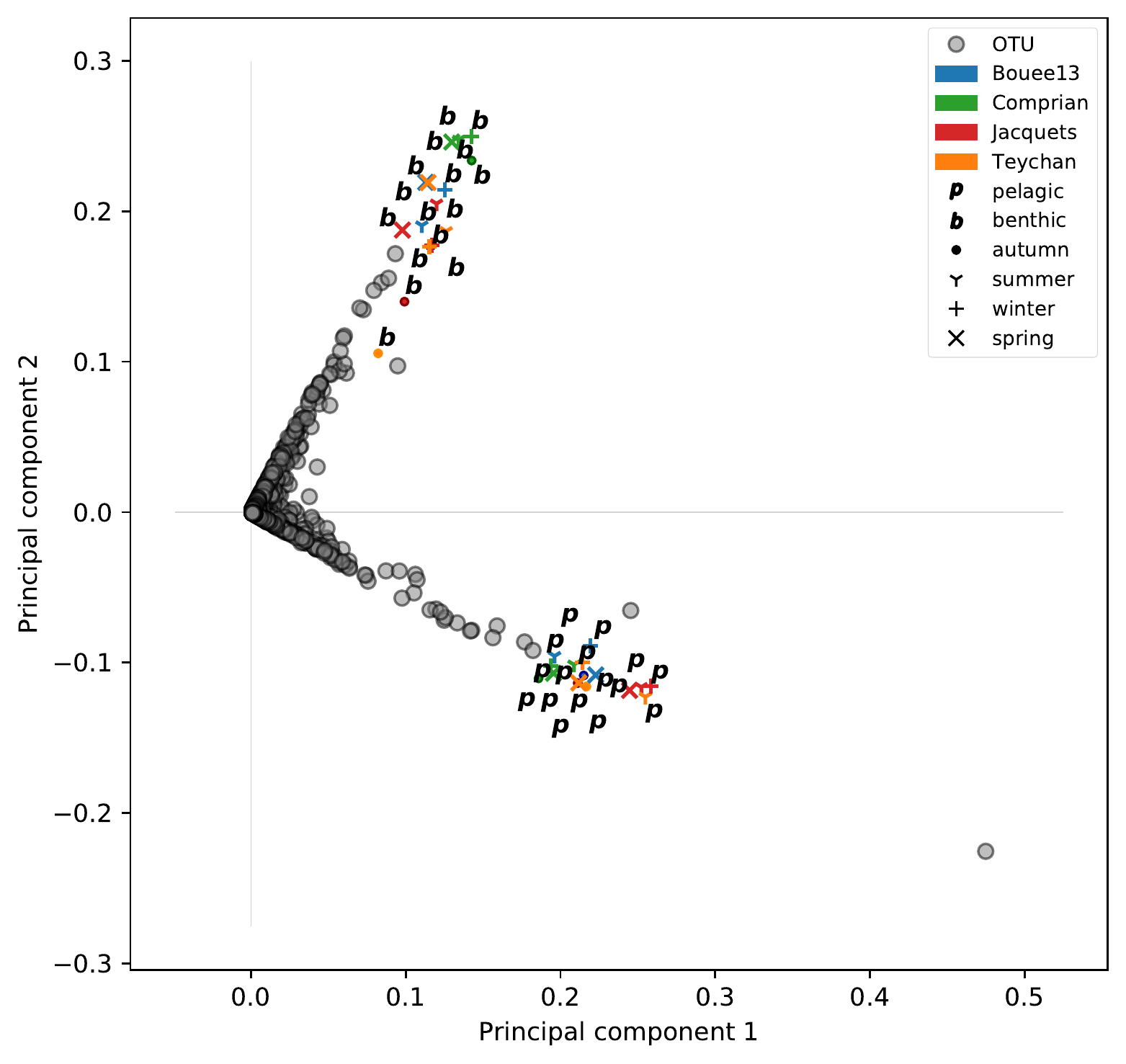}\label{fig:3aL}}
    	\quad
    	\subfloat[Multiway Correspondence Analysis.]{\includegraphics[scale=0.5, valign=t, width=0.48\textwidth]{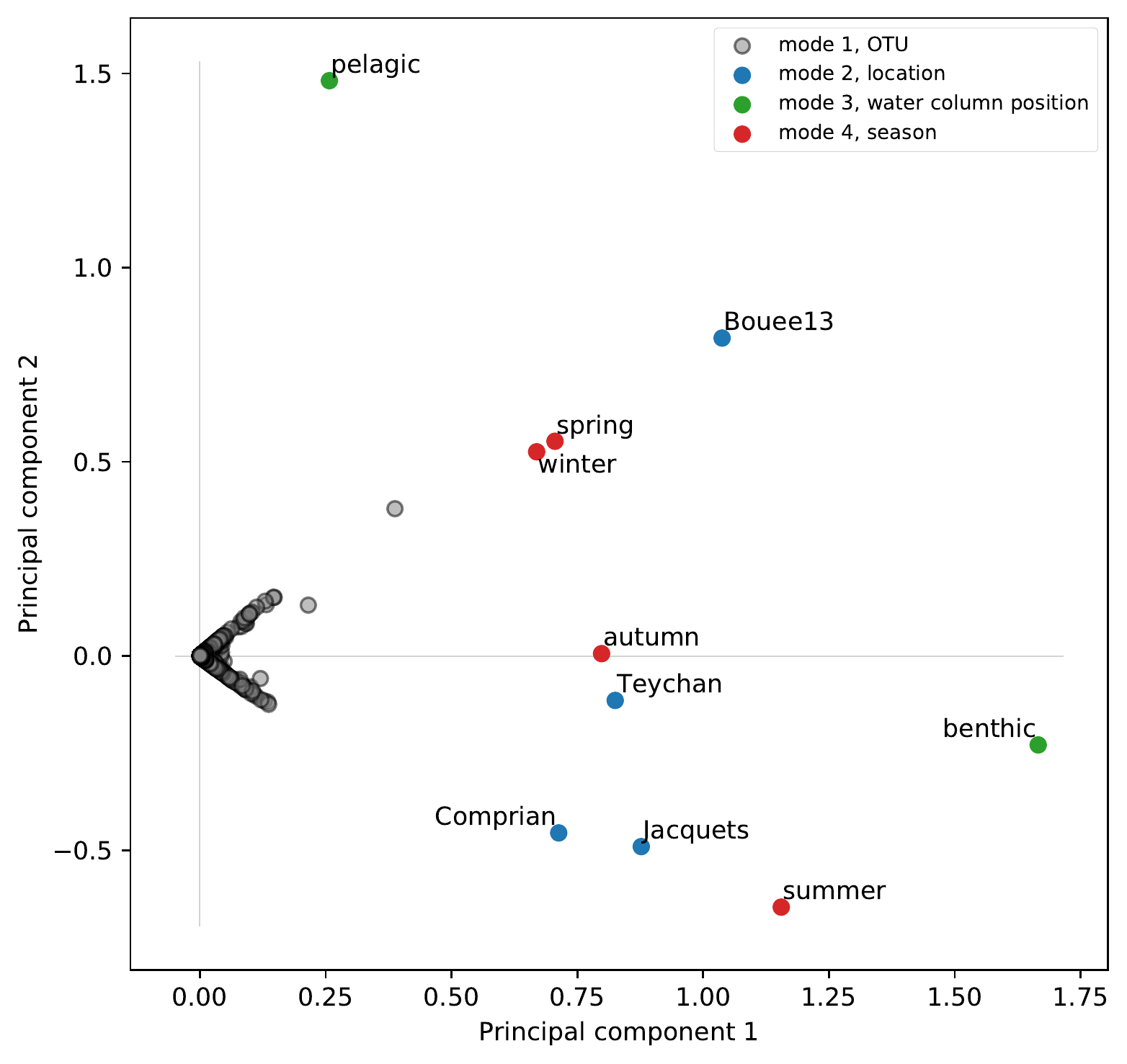}\label{fig:3aR}}
    	\vskip\baselineskip
    	\subfloat[CA with filtered OTU.]{\includegraphics[scale =0.5, valign=t, width=0.48\textwidth]{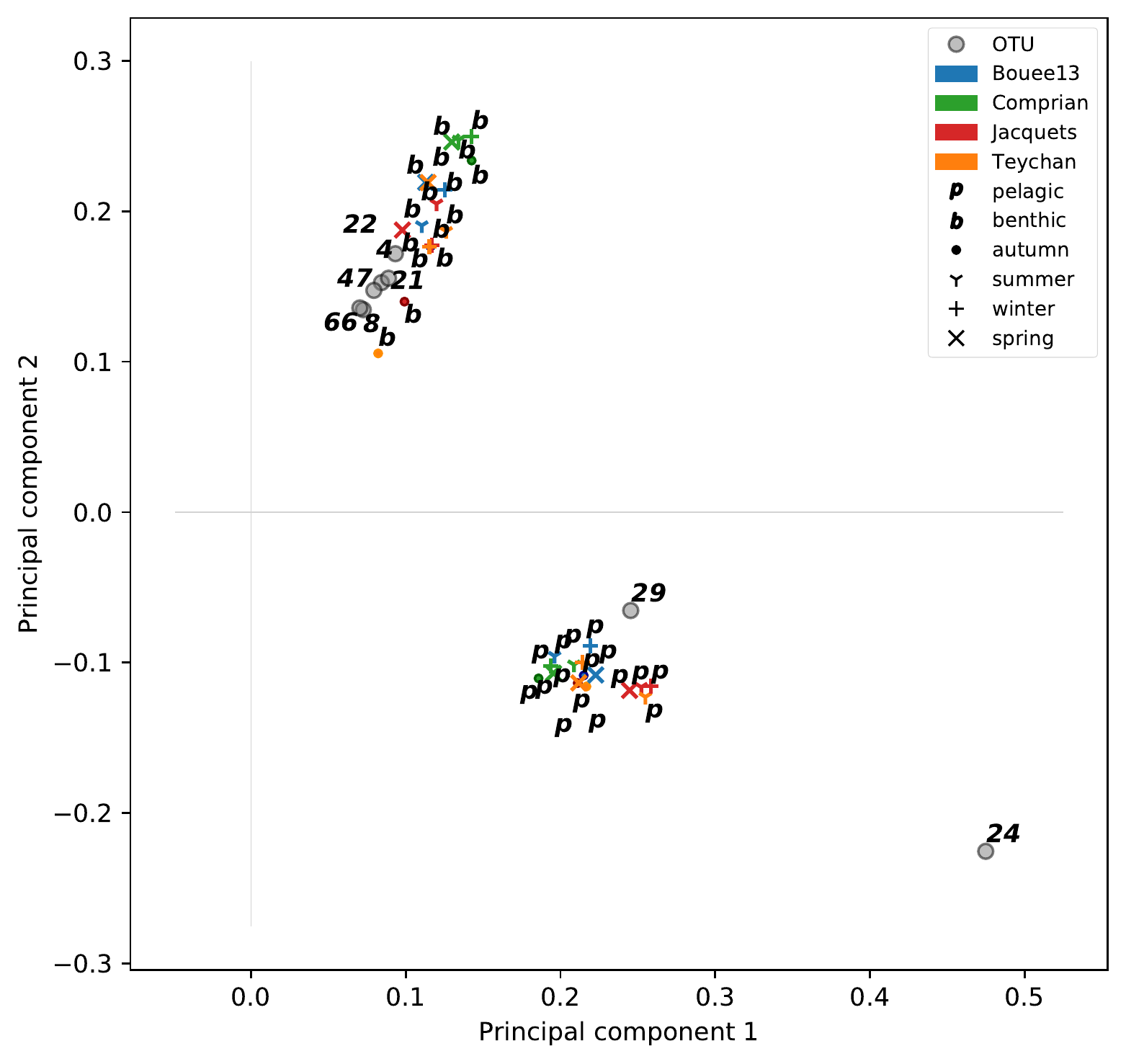}\label{fig:3bL}}
    	\quad
    	\subfloat[MWCA with filtered OTU.]{\includegraphics[scale=0.5, valign=t, width=0.48\textwidth]{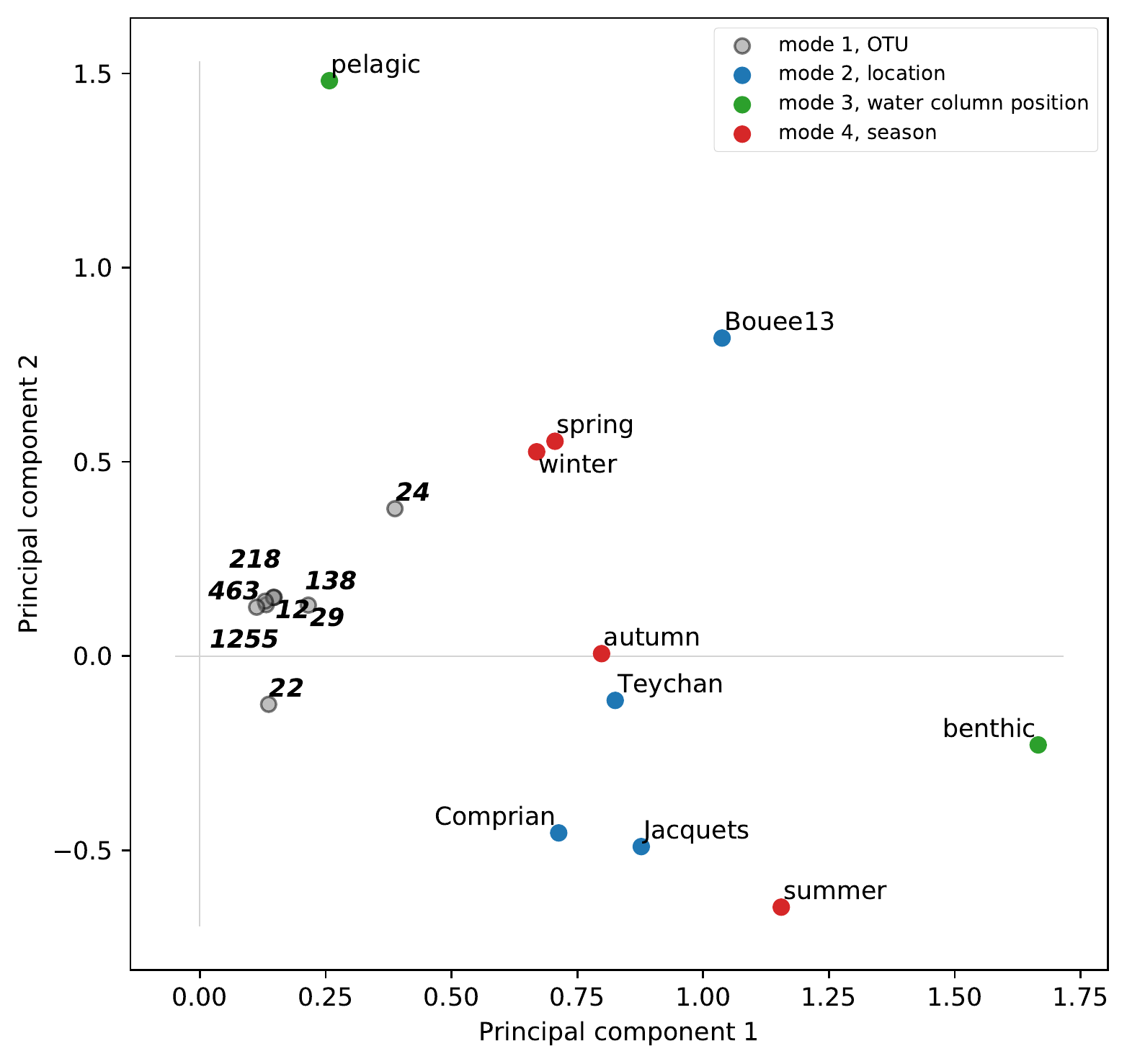}\label{fig:3bR}}
     \caption{CA vs MWCA biplot of Arcachon Bay data-set.}
     \label{fig:3}
	\end{figure}
  
  In Figure \ref{fig:3aL} the result of CA performed over $A_1$, the matricization of frequency tensor data $\ten{F}$ with respect to mode $1$, where on the rows there are the different OTUs and on the columns the locations, water column positions and seasons combined. Next to it, Figure \ref{fig:3aR} displays the result of MWCA over the tensor data. In the CA case OTU point cloud spreads over two orthogonal direction, led by the two positions in the water column. Similarly in MWCA plot the water column point cloud seem to organize and to affect the position and clustering of the others. Because of mode combination in Figure \ref{fig:3aL}, it is not evident which position in the water column affects which location or season. However thanks to the barycentric relation and the multiway approach, we can infer some relations among season, location and water column from Figure \ref{fig:3aR}, where they are independently displayed. Indeed pelagic point appears to affect the OTU present during spring and winter time. Similarly benthic position probably drives the distribution of OTU in autumn and summer. A similar argument can be repeated for location point cloud. Indeed the barycentric relation suggests that the benthic position leads the distribution of OTU in Comprian, Jacquets and Teychan, while the pelagic dot influences Bouee13 dot. The barycentric relation enables us to interpret the interaction among season and location point clouds. In Figure \ref{fig:3aR} winter and spring seem to be more correlated with Bouee13 and similarly summer and autumn are probably more correlated with the remaining three locations. Analysing the OTU and the season point clouds, we observe that winter and spring pull out one orthogonal direction over which part of the OTU points spread. Similarly summer pulls out the other OTU orthogonal direction. The autumn point appears to be neutral with respect to the OTU point cloud distribution. Similarly if we study the OTU and location point clouds interaction, one orthogonal direction is driven by Bouee13, one by Comprian and Jaquets, while Teychan seems to have a small effect on the OTU point distribution. 
  
  \par In Figures \ref{fig:3bL} and \ref{fig:3bR}, we filter the OTU by size, keeping just those with first principal components greater than $0.2$ and the second greater than $0.12$. Figure \ref{fig:3bL} confirms the idea of water-column position driving the OTU point cloud distribution. However in Figure \ref{fig:3aR} most of the OTU points are close to the origin. Indeed in this left plot we filtered $8$ OTU which are probably more correlated with benthic or pelagic condition. Comparing the two filtered figures, we see that only $3$ of the $8$ selected OTU are common between them, i.e., OTU `22', `24' and `29'. These common OTU share the same correlations between the two methods. Indeed in both case, `22' is more related to benthic position, while `24' and `29' with pelagic one. Moreover most of the OTU in Figure \ref{fig:3bR} are concentrated around the origin. We may infer that the action of the water column point cloud is softened by the other modes, here independent, leading to a more balanced distribution of OTU. In conclusion notice that in MWCA the different point clouds are more spread than in the CA case, enabling a deeper analysis of the variables interactions. It could be linked to the significant relative error $e(\tilde{{F}}^{(1)}, \tilde{A}_1) \approx 0.14$, which may also explain why CA and MWCA figures present a flipped $y$-axis. Indeed the mode combinations including benthic have positive y-components in Figure \ref{fig:3aL}, while benthic point has negative y-component in Figure \ref{fig:3aR}. However since it affects in the same way all the point clouds, the two methods lead to a coherent interpretation.
\end{example}

\section{Conclusion}

	The correspondence word in correspondence analysis comes from the relationship between the row and column point cloud coordinates, proven in the matrix case. We have proposed an extension of this correspondence to tensors by the High Order Singual Value Decomposition (HOSVD). Indeed, for each matricization of a given tensor, we computed the associated principal components. We first proved that each principal component is a linear combination of the Kronecker product of the others. Noting that, from the geometric viewpoint, principal components are the coordinates of point clouds, we provided a geometric interpretation of the algebraic link. The next step of our argument was to highlight the role of the metric in the principal component links. By finally choosing the CA metric, we recovered the barycentric link between the coordinates of the point clouds. The persistence of this relationship allows us to refer it as the extension of correspondence analysis to tensors through HOSVD. In Subsection \ref{sec2:5} we observed that several algorithms were proposed over the years to compute the Tucker basis. We focused on the simplest one, e.g., HOSVD. However the statements of all propositions can be proved using Tucker basis obtained from other algorithms, as, for example, HOOI,~\cite{Lathauwer2000b}. In this case a further algorithm to compute the singular values will be necessary to get the point clouds. Numerical experiments have shown two comparisons between the classical Correspondence Analysis (CA) performed on matricized tensor data and the MultiWay Correspondence Analysis (MWCA) implemented with HOSVD working with data structured as tensor. Both techniques guide to the same interpretation, although with different nuances. The main advantage MWCA is to highlight the global properties of a variable category. Indeed, the coordinates of each point cloud are computed independently from the others. Therefore MWCA catches at the same time the global pattern of point clouds and the minor ones. On the contrary CA is able to state clearly only the most important relation among two points clouds. It is worthwhile underlining that in CA combining variables helps in understanding their peculiarities. Indeed in CA two or more variables are coupled, because of the tensor matricization, and the analysis may highlight relations specific for a certain combination of modes. 
	\par In conclusion, the choice of MWCA or CA for interpreting multiway tables depends strongly on the objective of the data analysis one wishes to perform. We recommend MWCA when the objective is to discover an overall reciprocal correlation of different variables. While we suggest the CA when the objective is to identify specific combination patterns.

	\section{Acknowledgments.}
	The authors are very grateful to Luc Giraud for many helpful discussions and suggestions, which helped us improving this work. They thank also the teams of the Malabar project for sharing their data before their publication and letting them test their method.

\bibliographystyle{ieeetr}
\bibliography{references}

\begin{thebibliography}{10}

\bibitem{Izenman2008}
A.~J. Izenman, {\em Modern Multivariate Statistical Techniques}.
\newblock NY: Springer, 2008.

\bibitem{Wang2012}
J.~Wang, {\em Geometric structure if high-dimensional data and dimensionality
  reduction}.
\newblock Springer, 2012.

\bibitem{Pearson1901}
K.~Pearson, ``{LIII. O}n lines and planes of closest fit to systems of points
  in space,'' {\em The London, Edinburgh, and Dublin Philosophical Magazine and
  Journal of Science}, vol.~2, no.~11, pp.~559--572, 1901.

\bibitem{Hotelling1933}
H.~Hotelling, ``Analysis of a complex of statistical variables into principal
  components.,'' {\em Journal of Educational Psychology}, vol.~24, no.~6,
  pp.~417--441, 1933.

\bibitem{Gabriel1971}
K.~R. Gabriel, ``The biplot graphic display of matrices with application to
  principal component analysis,'' {\em Biometrika}, vol.~58, pp.~453--467, 12
  1971.

\bibitem{Atchley1975}
W.~Atchley and E.~Bryant, {\em Multivariate Statistical Methods: Among-Groups
  Covariation}.
\newblock Benchmark Papers in Systematic and Evolutionary Biology, Elsevier
  Science \& Technology Books, 1975.

\bibitem{Kendall1975}
M.~G. Kendall, {\em Multivariate analysis}.
\newblock Griffin London, 1975.

\bibitem{morrison1976}
D.~Morrison, D.~Morrison, L.~Marshall, M.-H. Incorporated, F.~Morrison,
  H.~Sahlin, N.~Y.~A. of~Sciences, L.~da~Vinci Academy~of Arts, and Sciences,
  {\em Multivariate Statistical Methods}.
\newblock Annals of the New York Academy of Sciences, McGraw-Hill, 1976.

\bibitem{chambers1977}
J.~Chambers, {\em Computational Methods for Data Analysis}.
\newblock Wiley Series in Probability and Mathematical Statistics, Wiley, 1977.

\bibitem{Jolliffe2002}
I.~T. Jolliffe, {\em Principal Component Analysis}.
\newblock Springer Series in Statistics, Springer-Verlag New York, second~ed.,
  2002.

\bibitem{Jolliffe2016}
I.~T. Jolliffe and J.~Cadima, ``Principal component analysis: a review and
  recent developments,'' {\em Philosophical Transactions of the Royal Society
  A: Mathematical, Physical and Engineering Sciences}, vol.~374, no.~2065,
  p.~20150202, 2016.

\bibitem{Mardia1979}
K.~V. Mardia, J.~Kent, and J.~M. Bibby, {\em Multivariate Analysis}.
\newblock Probability and Mathematical Statistics, Academic Press, 1979.

\bibitem{LMF82}
L.~Lebart, A.~Morineau, and J.-P. F\'enelon, {\em Traitement des donn\'ees
  statistiques}.
\newblock Dunod, Paris, 1982.

\bibitem{Saporta1990}
G.~Saporta, {\em Probabilit{\'e}s, Analyse de Données et Statistique}.
\newblock Editions Technip, 1990.

\bibitem{Anderson1958}
T.~Anderson, {\em An Introduction to Multivariate Statistical Analysis}.
\newblock Wiley Series in Probability and Statistics, Wiley, 2003.

\bibitem{Rao1973}
C.~R. Rao, {\em Linear statistical Infernece and its Applications}.
\newblock Wiley Series in Probability and Mathematical Statistics, Wiley,
  second~ed., 1973.

\bibitem{benzecri1973}
J.~Benz{\'e}cri and L.~Bellier, {\em L'analyse des donn{\'e}es: Benz{\'e}cri,
  J.-P. et al. L'analyse des correspondances}.
\newblock L'analyse des donn{\'e}es: le{\c{c}}ons sur l'analyse factorielle et
  la reconnaissance des formes, et travaux du Laboratoire de statistique de
  l'Universit{\'e} de Paris VI, Dunod, 1973.

\bibitem{Hill1974}
M.~O. Hill, ``Correspondence analysis: A neglected multivariate method,'' {\em
  Journal of the Royal Statistical Society: Series C (Applied Statistics)},
  vol.~23, no.~3, pp.~340--354, 1974.

\bibitem{Greenacre1977}
M.~Greenacre and L.~Degos, ``Correspondence analysis of {HLA} gene frequency
  data from 124 population samples,'' {\em American journal of human genetics},
  vol.~29, pp.~60--75, 02 1977.

\bibitem{greenacre1978}
M.~J. Greenacre, ``Some objective methods of graphical display of a data
  matrix,'' {\em Special Report). UNISA, Pretoria}, 1978.

\bibitem{gauch_1982}
H.~G. Gauch, {\em Multivariate Analysis in Community Ecology}.
\newblock Cambridge Studies in Ecology, Cambridge University Press, 1982.

\bibitem{gifi1990}
A.~Gifi, {\em Nonlinear Multivariate Analysis}.
\newblock Wiley Series in Probability and Statistics, Wiley, 1990.

\bibitem{Greenacre1984}
M.~J. Greenacre, {\em Theory and Applications of Correspondence Analysis}.
\newblock Academic Press, 1984.

\bibitem{Greenacre2007}
M.~J. Greenacre, {\em Correspondence Analysis in pratice}.
\newblock Chapman \& Hall/CRC, second~ed., 2007.

\bibitem{Cordier1965}
B.~Escofier-Cordier, {\em L'analyse factorielle des correspondances}.
\newblock PhD thesis, University of Rennes, 1965.

\bibitem{Benzecri1969}
J.~Benz{\'e}cri, ``Statistical analysis as a tool to make patterns emerge from
  data,'' in {\em Methodologies of Pattern Recognition} (S.~Watanabe, ed.),
  pp.~35--74, Academic Press, 1969.

\bibitem{Kroonenberg1983}
P.~M. Kroonenberg, {\em Three-mode principal component analysis: Theory and
  applications}.
\newblock DSWO Press, Leiden, 1983.

\bibitem{Bolasco1989}
R.~Coppi and S.~Bolasco, eds., {\em Multiway Data Analysis}.
\newblock North-Holland Publishing Co., 1989.

\bibitem{Franc1992}
A.~Franc, {\em Etude Alg\'ebrique des multitableaux: apports de l'alg\`{e}bre
  tensorielle}.
\newblock PhD thesis, University of Montpellier, 1992.

\bibitem{Lathauwer2000a}
L.~De~Lathauwer, B.~De~Moor, and J.~Vandewalle, ``A multilinear singular value
  decomposition,'' {\em SIAM Journal on Matrix Analysis and Applications},
  vol.~21, no.~4, pp.~1253--1278, 2000.

\bibitem{Lathauwer2000b}
L.~De~Lathauwer, B.~De~Moor, and J.~Vandewalle, ``On the best rank-1 and
  rank-$(r_1, r_2, \ldots, r_n)$ approximation of high order tensors,'' {\em
  SIAM Journal on Matrix Analysis and Applications}, vol.~21, no.~4,
  pp.~1324--1342, 2000.

\bibitem{Kroonenberg2008}
P.~M. Kroonenberg, {\em Applied Multiway Data Analysis}.
\newblock Wiley, 2008.

\bibitem{Kolda2009}
T.~G. Kolda and B.~W. Bader, ``Tensor decompositions and applications,'' {\em
  SIAM Review}, vol.~51, pp.~455--500, September 2009.

\bibitem{Franc1989}
A.~Franc, ``Multiway arrays: some algebraic remarks,'' in {\em Multiway Data
  Analysis} (R.~Coppi and S.~Bolasco, eds.), ch.~2, pp.~19--29, NLD:
  North-Holland Publishing Co., 1989.

\bibitem{Tucker1966}
L.~R. Tucker, ``Some mathematical notes on three-modes factor analysis,'' {\em
  Psychometrika}, vol.~31, no.~3, pp.~279--311, 1966.

\bibitem{SAND2006-2081}
T.~G. Kolda, ``Multilinear operators for higher-order decompositions,'' Tech.
  Rep. SAND2006-2081, Sandia National Laboratories, April 2006.

\bibitem{Kapteyn1986}
K.~Arie, H.~Neudecker, and T.~Wansbeek, ``An approach to $n-$mode components
  analysis,'' {\em Psychometrika}, vol.~51, no.~2, pp.~269--275, 1986.

\bibitem{Grasedick2010}
L.~Grasedyck, ``Hierarchical singular value decomposition of tensors,'' {\em
  SIAM Journal on Matrix Analysis and Applications}, vol.~31, no.~4,
  pp.~2029--2054, 2010.

\bibitem{StHOSVD}
N.~Vannieuwenhoven, R.~Vandebril, and K.~Meerbergen, ``A new truncation
  strategy for the higher-order singular value decomposition,'' {\em SIAM
  Journal on Scientific Computing}, vol.~34, no.~2, pp.~A1027--A1052, 2012.

\bibitem{Kroonenberg1980}
P.~Kroonenberg and J.~de~Leeuw, ``Principal component analysis of three modes
  data by means of alternating least square algorithms,'' {\em Psychometrika},
  vol.~45, no.~1, pp.~69--97, 1980.

\bibitem{Lastovicka1981}
J.~L. Lastovicka, ``The extension of component analysis to four-mode
  matrices,'' {\em Psychometrika}, vol.~46, no.~1, pp.~47--57, 1981.

\bibitem{BroSVD}
R.~Bro, E.~Acar, and T.~G. Kolda, ``Resolving the sign ambiguity in the
  singular value decomposition,'' {\em Journal of Chemometrics}, vol.~22,
  no.~2, pp.~135--140, 2008.

\bibitem{BroSignIndeter}
R.~Bro, R.~Leardi, and L.~G. Johnsen, ``Solving the sign indeterminacy for
  multiway models,'' {\em Journal of Chemometrics}, vol.~27, no.~3-4,
  pp.~70--75, 2013.

\bibitem{tensorly}
J.~Kossaifi, Y.~Panagakis, A.~Anandkumar, and M.~Pantic, ``Tensorly: Tensor
  learning in python,'' {\em Journal of Machine Learning Research}, vol.~20,
  no.~26, pp.~1--6, 2019.

\bibitem{greenacre1993Data}
CARME-N, ``Spanish national health survey.''
  \url{http://www.carme-n.org/?sec=data2}, 2007.
\newblock [Data-set 3].

\bibitem{Malabardata}
``Malabar{\ }project data paper ({IFREMER}, {CNRS}, {INRAE}, {L}abex {COTE}).''
  in prep., 202{X}.

\bibitem{greenacre1993}
M.~Greenacre, {\em Correspondence Analysis in Practice}.
\newblock Academic Press, 1993.
\newblock Data-sets are available at.

\end{thebibliography}
\end{document}